\newcommand{\clusteralgo}{\texttt{\small clustering into
    bubbles}\xspace}%
\newcommand{\veloptalgo}{\texttt{\small bubble velocity
    optimization}\xspace}%
\newcommand{\velboundalgo}{\texttt{\small bounding optimal bubble
    velocity}\xspace}%
\newcommand{\schedalgo}{\texttt{\small schedule optimization}\xspace}%
\newcommand{\localvehcontrol}{\texttt{\small local vehicular}\xspace}%
\newcommand{\mycomment}[1]{\hfill \COMMENT{\texttt{#1}}}
\newtheorem{theorem}{Theorem}[section]
\newtheorem{proposition}[theorem]{Proposition}
\newtheorem{lemma}[theorem]{Lemma}
\newtheorem{corollary}[theorem]{Corollary}
\newtheorem{remark}[theorem]{Remark}
\newtheorem{definition}[theorem]{Definition}
\newcommand{\longthmtitle}[1]{\mbox{}\textit{(#1).}}
\newcommand{\real}{\ensuremath{\mathbb{R}}}
\newcommand{\realpositive}{\ensuremath{\mathbb{R}_{>0}}}
\newcommand{\realnonnegative}{\ensuremath{\mathbb{R}_{\ge 0}}}
\newcommand{\integers}{{\mathbb{Z}}}
\newcommand{\integerspositive}{{\mathbb{N}}}
\newcommand{\integersnonnegative}{{\mathbb{N}}_0}
\newcommand{\union}{\ensuremath{\operatorname{\cup}}}
\newcommand{\Cc}{\mathcal{C}}
\newcommand{\Dc}{\mathcal{D}}
\newcommand{\Fc}{\mathcal{F}}
\newcommand{\Ic}{\mathcal{I}}
\newcommand{\Lc}{\mathcal{L}}
\newcommand{\Mc}{\mathcal{M}}
\newcommand{\Nc}{\mathcal{N}}
\newcommand{\Pc}{\mathcal{P}}
\newcommand{\Qc}{\mathcal{Q}}
\newcommand{\Rc}{\mathcal{R}}
\newcommand{\Sc}{\mathcal{S}}
\newcommand{\Zc}{\mathcal{Z}}
\newcommand{\tauocc}{\tau^{\text{occ}}}
\newcommand{\avgvel}{\bar{v}}
\newcommand{\lowvel}{\bar{v}^m}
\newcommand{\hivel}{\bar{v}^M}
\newcommand{\xv}{x^v}
\newcommand{\vv}{v^v}
\newcommand{\dxv}{\dot{x}^v}
\newcommand{\dvv}{\dot{v}^v}
\newcommand{\uv}{u^v}
\newcommand{\nunom}{\nu^{\text{nom}}}
\newcommand{\Dcnom}{\Dc^{\text{nom}}}
\newcommand{\Tnom}{T^{\text{nom}}}
\newcommand{\Tfol}{T^{\text{fol}}}
\newcommand{\taue}{\tau^e}
\newcommand{\taul}{\tau^l}
\newcommand{\taumin}{\tau^{\text{min}}}
\newcommand{\Ta}{T^a}
\newcommand{\Tx}{T^{\text{exit}}}
\newcommand{\ulinevv}{\underline{v}^v}
\newcommand{\Tiat}{T^{iat}}
\newcommand{\maxNbubbles}{\bar{\Nc}}
\newcommand{\Ncbar}{\bar{\Nc}}
\newcommand{\stagingset}{\Zc^s}
\newcommand{\midset}{\Zc^m}
\newcommand{\exitset}{\Zc^e}
\newcommand{\Nnewvehicles}{n^{ua}}   
\newcommand{\timestepCS}{T_{cs}}
\newcommand{\guc}{g_{uc}}
\newcommand{\gsf}{g_{sf}}
\newcommand{\gus}{g_{us}}
\newcommand{\hilim}[1]{H^#1}
\newcommand{\sigP}[1]{\sigma_#1}
\newcommand{\card}[1]{|#1|}
\newcommand{\Len}[1]{\card{#1}}
\newcommand{\first}[1]{#1(1)}
\newcommand{\last}[1]{#1(\Len{#1})}
\newcommand{\until}[1]{\{1,\dots,{#1}\}}
\newcommand{\branches}{\{1,2,3,4\}}
\newcommand{\bartauocc}{\bar{\tau}^{\text{occ}}}
\newcommand{\satu}[1]{\left[ #1 \right]_{u_m}^{u_M}}
\newcommand{\oprocendsymbol}{\hbox{$\bullet$}}
\newcommand{\oprocend}{\relax\ifmmode\else\unskip\hfill\fi\oprocendsymbol}
\begin{document}

\title{Hierarchical-distributed optimized coordination
  \\
  of intersection traffic\thanks{A preliminary version of this work
    appeared as~\cite{PT-JC:15-necsys} at the 5th IFAC Workshop on
    Distributed Estimation and Control in Networked Systems.}}




\author{Pavankumar Tallapragada \qquad Jorge Cort{\'e}s
  \thanks{Pavankumar Tallapragada and Jorge Cort{\'e}s are with the
    Department of Mechanical and Aerospace Engineering, University of
    California, San Diego {\tt\small
      \{ptallapragada,cortes\}@ucsd.edu}}%
}

\maketitle

\begin{abstract}
  This paper considers the problem of coordinating the vehicular
  traffic at an intersection and on the branches leading to it for
  minimizing a combination of total travel time and energy
  consumption. We propose a provably safe hierarchical-distributed
  solution to balance computational complexity and optimality of the
  system operation. In our design, a central intersection manager
  communicates with vehicles heading towards the intersection, groups
  them into clusters (termed bubbles) as they appear, and determines
  an optimal schedule of passage through the intersection for each
  bubble. The vehicles in each bubble receive their schedule and
  implement local distributed control to ensure system-wide
  inter-vehicular safety while respecting speed and acceleration
  limits, conforming to the assigned schedule, and seeking to optimize
  their individual trajectories. Our analysis rigorously establishes
  that the different aspects of the hierarchical design operate in
  concert and that the safety guarantees provided by the proposed
  design are satisfied. We illustrate its execution in a suite of
  simulations and compare its performance to traditional signal-based
  coordination over a wide range of system parameters.
\end{abstract}

\begin{keywords}
  Intelligent transportation systems, hierarchical and distributed
  control, optimized operation and scheduling, state-based
  intersection management, networked vehicles
\end{keywords}

\section{Introduction}

With rapidly growing urbanization and mobility needs of people across
the world, existing transportation systems are in critical need of
transformation. Apart from increased travel times, current burdened
transportation systems have the side effects of increased pollution,
increased energy consumption, and degradation of people's health, all
of which have an immeasurable cost on society. 
The complexity of the challenge requires a multi-pronged approach, one
of which is the development of new technologies. Emerging technologies
such as vehicle-to-vehicle (V2V) and vehicle-to-infrastructure (V2I)
communication, and computer-controlled vehicles offer the opportunity
to radically redesign our transportation systems, eliminating road
accidents and traffic collisions and positively impacting safety,
traveling ease, travel time, and energy consumption.

A particularly useful application of these technologies is the
coordination of traffic at and near intersections for a smoother (with
reduced stop-and-go) and fuel-efficient traffic flow. An intersection
manager with knowledge of the state of the traffic could schedule the
intersection crossings of the vehicles. With the assigned schedule,
individual vehicles could further optimize their travel to the
intersection in a fuel-efficient way.  In contrast to traditional
intersection management, networked vehicle technologies allow us to
coordinate the traffic not just \emph{within the intersection}, but
also by controlling the vehicles' behavior much before they arrive at
the intersection. Such a paradigm offers the possibility of
significantly reduced stop times and increased fuel efficiency, and is
the subject of this paper.

\subsubsection*{Literature review}
Much of the literature in the area of coordination-based intersection
management focuses on collision avoidance of vehicles \emph{within the
  intersection}. Supervisory intersection management (intervention
only when required to maintain safety by avoiding collisions) is
explored using discrete event abstractions
in~\cite{ED-AC-DDV-SL:13acc
} and reachable set computations in~\cite{AC-DDV:14,
  MRH-DC-LC-DDV:13}. The works~\cite{KD-PS:08,DF-TA-STW-PS-DY:11} and
references therein describe a multiagent simulation approach in which,
upon a reservation request from a vehicle, an intersection manager
accepts or rejects the reservation based on a simulation. Each vehicle
attempts to conform to its assigned reservation and, if this is
predicted not to be possible at any time, the reservation is
canceled.~\cite{HK-DC-PRK:11} also uses a reservation-based system to
schedule intersection crossing times and provides provably safe
maneuvers for vehicle following in a lane as well as for crossing the
intersection. \cite{RH-GRC-PF-HW:15, GRC-PF-HW-RH-JS:14} use a method
based on model predictive control to coordinate the intersection
crossing by vehicles and obtain suboptimal solutions to a linear
quadratic optimal control problem. \cite{MASK-etal:15} also proposes a
model predictive control approach in which collision-free intersection
crossing by vehicles is achieved through a combination of hard
no-collision constraints as well as a soft constraint in the form of a
term measuring collision risk in the cost
function. In~\cite{XQ-JG-FM-ADLF:14}, a heuristic policy assigns
priorities to the vehicles, while each vehicle applies a
priority-preserving control and legacy vehicles platoon behind a
computer-controlled car.  In this context, we note that the ability to
efficiently coordinate diminishes as the vehicles get closer to the
intersection. This is why here we take an expanded view of
intersection management that looks at the coordinated control of the
vehicles much before they arrive at the intersection.
The methods above are not suited for this setup or would prove to be
too computationally costly in such scenarios. An example of the
expanded view of intersection management is~\cite{DM-SK:14}, in which
a polling-systems approach is adopted to assign schedules, and then
optimal trajectories for all vehicles are computed sequentially in
order. Such optimal trajectory computations are costly and depend on
other vehicles' detailed plans, and hence the system is not robust.
Closer to this paper, the works \cite{QJ-GW-KB-MB:12,QJ-GW-KB-MB:13}
describe a hierarchical setup, with a central coordinator verifying
and assigning reservations, and with vehicles planning their
trajectories locally to platoon and to meet the assigned schedule.
The proposed solution is based on multiagent simulations and a
reservation-based scheduling (with the evolution of the vehicles
possibly forcing revisions to the schedule), both important
differences with respect to our approach. \cite{LL-DW-DY:14} is a
recent survey of traffic control with vehicular networks and provides
other related references.

\subsubsection*{Statement of contributions}

We propose a provably safe intersection management system aimed at
optimizing a combination of cumulative travel time and fuel usage for
all the vehicles.  Our first contribution is the idea of coordinating
the traffic at the intersection and on the branches leading to it in a
unified, holistic way. The basic observation is that planning and
controlling the vehicles from much before they arrive at the
intersection should lead to better overall coordination and
efficiency.

Our second set of contributions is a multi-layered design that
combines hierarchical and distributed control and is applicable to a
wide range of traffic conditions. Our hierarchical-distributed
approach offers a good balance between computational complexity of the
solution and optimal operation. The proposed system is composed of
three main aspects: (i) clustering to identify vehicles that platoon
before arriving at the intersection. We refer to such clusters of
vehicles as \emph{bubbles}.  We use the term bubble, rather than
platoon, to emphasize the dynamic, time-varying nature of the
cohesiveness of the group of vehicles as they travel towards the
intersection. With this terminology, the bubble becomes a rigid and
cohesive group (i.e., a platoon) by the time they cross the
intersection; (ii) a branch-and-bound scheduling algorithm that, using
aggregate information about the bubbles, allows a central intersection
manager to find the optimal schedule of bubble passage; and (iii) a
distributed control algorithm for the vehicles at the local level.
This control policy ensures that the vehicles of each bubble platoon
into a cohesive group when they cross the intersection and that each
bubble conforms to the schedule prescribed by the intersection
manager, while guaranteeing system-wide safety subject to speed limits
and acceleration saturation. Additionally, each vehicle seeks to
optimally control its trajectory whenever safety is not immediately
threatened.  The first two aspects are contributions of this paper
while the aspect of local vehicular control incorporates the
algorithmic solution from our previous work~\cite{PT-JC:17-tcns}.

Our third and final contribution is the technical analysis leading to
the provable safety of our design. In contrast to computationally
intensive multiagent simulation-based methods, we provide analytical
guarantees on correctness, safety, and performance. Further, the
results provide good intuition and fundamental and reliable principles
for future designs.  We do acknowledge that the development of
analytical guarantees comes at the cost of some conservatism in the
design. We have performed a suite of simulations comparing our
approach to traditional signal-based coordination that show a
significant improvement in the cumulative energy consumption for a
wide range of traffic densities and a more socially equitable
distribution of cost. However, the throughput of the intersection is
significantly less in our approach than that of signal-based
coordination except for low densities of traffic.  As a final note for
the reader's sake, we have made every effort in the presentation to
make the components of the paper understandable even if the proofs of
the technical results are skipped in a first reading.

\section{Preliminaries}

We present here some basic notation and concepts on graph theory used
throughout the paper.

\subsubsection*{Notation}
We let $\real$, $\realnonnegative$, $\integers$, $\integerspositive$,
and $\integersnonnegative$ denote the set of real, nonnegative real,
integer, positive integer, and nonnegative integer numbers,
respectively. For a non-empty ordered list $\Sc = \{j_1, \ldots,
j_s\}$, we let $\Len{\Sc}$ denote the cardinality of $\Sc$. Further,
$\Sc(i)$ denotes the $i^{\text{th}}$ element $j_i$ of $\Sc$. Thus,
$\last{\Sc}$ denotes the last element of~$\Sc$. For convenience, we
also use the notation $j \in \Sc$ ($j \notin \Sc$) to denote that $j$
is (is not) an element of~$\Sc$. For two ordered lists $\Sc_1$ and
$\Sc_2$, we let $\Sc_1 \setminus \Sc_2$ denote the ordered list of
elements that belong to $\Sc_1$ but not to $\Sc_2$, while preserving
the same order of~$\Sc_1$.
Given $u_m \le u_M$, $\satu{u}$ denotes the number $u$
lower and upper saturated by $u_m$ and $u_M$ respectively, i.e.,
\begin{equation*}
  \satu{u} \triangleq
  \begin{cases}
    u_m, \quad \text{if } u \leq u_m,
    \\
    u, \quad \text{if } u \in [u_m, u_M],
    \\
    u_M, \quad \text{if } u \geq u_M .
  \end{cases}
\end{equation*}

\subsubsection*{Graph theory}
We review basic notions following the exposition
in~\cite{RD:00,FB-JC-SM:08cor}.  A digraph of order $n$ is a pair $G =
(V,E)$, where $V$ is a set with $n$ elements called nodes and $E$ is a
set of ordered pair of nodes called edges.  A directed path is an
ordered sequence of nodes such that any ordered pair of nodes
appearing consecutively is an edge. A cycle is a directed path that
starts and ends at the same node and contains no repeated node except
for the initial and the final one. A digraph is acyclic if it has no
cycles.  A directed (or rooted) tree is an acyclic digraph with a
node, called root, such that any other node can be reached by one and
only one directed path starting at the root. If $(i,j)$ is an edge of
a tree, $i$ is the parent of $j$, and $j$ is the child of~$i$. A
node~$j$ is called a descendant of a node~$i$ if there is a directed
path from~$i$ to~$j$. Given a tree, a subtree rooted at $i$ is the
tree that has $i$ as its root and is composed by all its descendants
in the original tree.

\section{Problem statement}\label{sec:problem-statement}

Consider an intersection of length $\Delta$ with four incoming traffic
branches labeled by $\branches$, cf. Figure~\ref{fig:intersect}.
\begin{figure}[!htpb]
  \centering
  \includegraphics[width=0.6\linewidth]{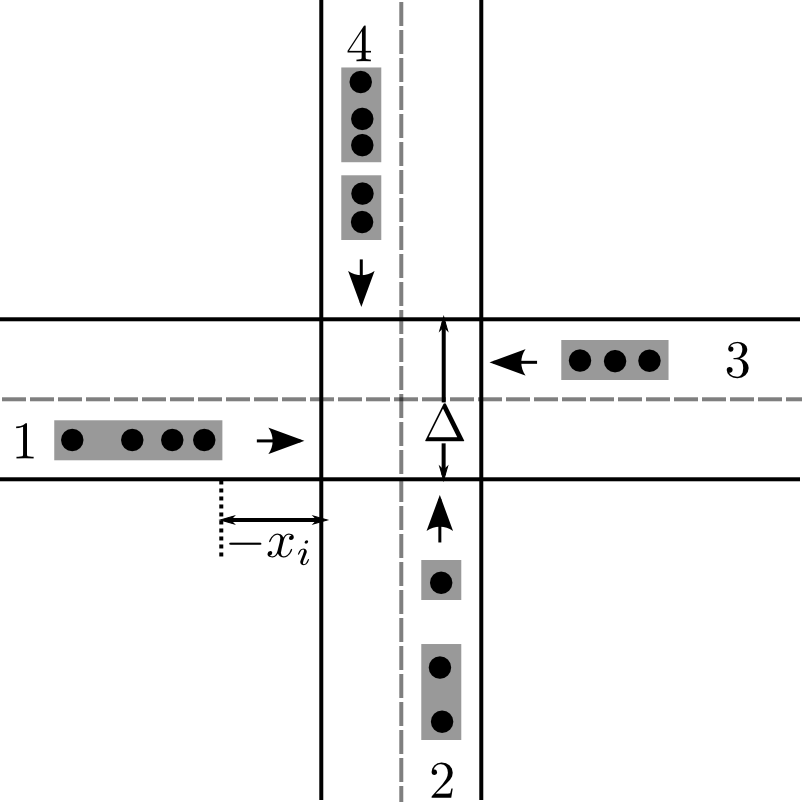}
  \caption{Traffic near an intersection. Black dots represent
    individual vehicles, which are clustered and contained within
    \emph{bubbles}, represented by grey boxes. $\Delta$ is the length
    of the intersection and the numbers $\branches$ are labels for
    the incoming branches.}\label{fig:intersect}
\end{figure}
For simplicity, we assume that (i) there is a single lane in each
direction, (ii) all vehicles are identical with length $L$, (iii)
vehicles do not turn at the intersection, (iv) the intersection at any
time may be used by vehicles from a single branch (v) there are no
sources or sinks for vehicles along the branches - all new traffic
appears at the beginning of the branches and must cross the
intersection. We discuss later in Remark~\ref{re:assumptions} the
extent to which these assumptions can be relaxed in our algorithmic
solution.  The dynamics of vehicle $j$ is a fully actuated
second-order system,
\begin{subequations}\label{eq:vehicle-dyn}
  \begin{align}
    \dxv_j(t) &= \vv_j(t) ,
    \\
    \dvv_j(t) &= \uv_j(t) ,
  \end{align}
\end{subequations}
where $\xv_j$, $\vv_j \in \real$ are the position (negative of the
distance from the front of the vehicle to the beginning of the
intersection) and velocity of the vehicle, respectively and $\uv_j(t)
\in [u_m, u_M]$, with $u_m\le 0 \le u_M$, is the input
acceleration. We use the superscript $v$ to emphasize that the state
and control variables refer to individual vehicles. We assume that
each branch has a maximum speed limit that the vehicles must
respect. For the sake of easing the notation, we assume that the speed
limit on all branches is the same and equals $v^M$. Thus, for each
vehicle $j$, $\vv_j(t)$ must belong to the interval $[0, v^M]$ for all
time $t$ that the vehicle is in the system.

Each vehicle is equipped with vehicle-to-vehicle (V2V) and
vehicle-to-infrastructure (V2I) communication capabilities. With V2I
communication, the vehicles inform a central \emph{intersection
  manager} (IM) about their positions and velocities and receive from
it commands such as prescribed time of arrival at the intersection. We
assume the IM has the necessary communication and computing
capabilities.  We seek a design solution that aims to minimize a cost
function~$C$ that models a combination of cumulative travel time and
cumulative fuel cost of the form
\begin{equation}\label{eq:cost-fun}
  C \triangleq \sum_{j} \int_{t^{\text{spawn}}_j}^{\Tx_j} ( W_T + | \uv_j | )
  \mathrm{d}t ,
\end{equation}
where $j$ is the vehicle index, $t^{\text{spawn}}_j$ is the time at
which vehicle~$j$ `spawns' into the problem domain and $\Tx_j$ is the
time at which the vehicle exits the intersection, i.e., $\xv_j(\Tx_j)
= \Delta + L$. The weight $W_T$ sets the relative importance of travel
time versus fuel cost. The vehicles over which the cost is summed may
be chosen in different ways - for example it may be over all vehicles
that cross the intersection in a time period or it may be over a fixed
number of vehicles. The constraints in the problem arise from the
speed limit, bounds on vehicle acceleration and deceleration, and the
safety requirements - which require scheduling the intersection
crossing of the vehicles and maintenance of safe distance between the
vehicles. Solving this problem at the level of individual vehicles is
computationally expensive and not scalable. Thus, we aim to synthesize
a solution that makes this problem tractable to solve in real time and
is applicable to a wide range of traffic scenarios.

\section{Overview of hierarchical distributed
  solution}\label{sec:solution-overview}

This section gives an outline of our hierarchical distributed solution
to the problem stated in Section~\ref{sec:problem-statement}. Our
algorithmic solution combines optimized planning and scheduling of
groups of vehicles with local distributed control to ensure safety and
execute the plans. Its three distinct aspects are:
\begin{enumerate}
\item grouping the vehicles into clusters,
\item scheduling the passage of the clusters through the intersection,
\item local vehicular control to achieve and maintain cluster
  cohesion, to avoid collisions, and to ensure the clusters meet the
  prescribed schedule.
\end{enumerate}
Each of these aspects is coupled with the other two. Moreover, an
overarching theme is the dynamic nature of the problem due to the
arrival and departure of vehicles. Any complete or partial solution
has to be computed as new vehicles come in (event based) or at regular
time intervals (time based).  In what follows, we provide a general
description of the main ingredients of each aspect. At any given time
$t$, we let $t_s$ be the latest time prior to $t$ at which the IM
samples the state of traffic and solves the corresponding static
scheduling problem. 

\subsubsection*{Aspect 1 -- generation of bubbles}

The primary motivation to cluster vehicles is to reduce the number of
independent entities that need to be considered in the
(computationally expensive) schedule optimization problem. For
instance, the maximum number of clusters can be fixed according to the
available computational resources so that the scheduling problem
remains tractable.  At time $t_s$, the vehicles present in the four
branches are grouped into $N$ clusters. We let $N_k$ denote the number
of clusters on branch~$k$. Given the position information of the
vehicles at $t_s$, we use $k$-means clustering on each branch
individually to identify the clusters.  The relative positions of the
vehicles of a cluster may vary significantly over the course of their
travel and the vehicles may not be in the form of a well-defined
platoon at all times. Hence, we refer to a cluster of vehicles as a
\emph{bubble} (shown as grey boxes in Figure~\ref{fig:intersect}). The
defining characteristic of a bubble is that all the vehicles of a
bubble cross the intersection together. The state of the
$i^{\text{th}}$ bubble is given by the tuple
\begin{equation*}
  \xi_i = (x_i, v_i, m_i, \bartauocc_i, \Ic_i) \in \real^4 \times 
  \{1, 2, 3, 4\},
\end{equation*}
%
%
where $x_i$, $v_i$ and $m_i$ are, respectively, the position of the
lead vehicle in the bubble, the velocity of the lead vehicle in the
bubble, and the number of vehicles in the bubble. We denote by
$\tauocc_i$, the \emph{occupancy time} of bubble $i$, which is the
time duration for which the intersection is occupied by bubble
$i$. The quantity $\bartauocc_i$ is an upper bound that can be
guaranteed \emph{a priori}, and is a function of the bubble size $m_i$
and various other system parameters. The quantity~$\Ic_i$ denotes
which of the four incoming branches the bubble is on. Within each
branch, we require the order of the bubbles to remain constant during
the bubbles' travel (i.e., there is no passing allowed).  To capture
the order of the bubbles on a branch, we define the function~$\Rc$,
\begin{align*}
  \Rc(i,q) \triangleq
  \begin{cases}
    1, &\text{if } \Ic_{i} = \Ic_{q},\ x_{q}(t_s) <
    x_{i}(t_s),
    \\
    &\nexists i_1 \text{ s.t. } \Ic_{i_1} = \Ic_i, \ x_{q}(t_s) <
    x_{i_1}(t_s) < x_{i}(t_s) ,
    \\
    0, &\text{otherwise} .
  \end{cases}
\end{align*}
According to this definition, $\Rc(i,q) = 1$ if and only if bubbles
$i$ and $q$ are on the same branch and bubble $q$ is the immediate
follower of bubble~$i$.  We describe in detail the generation of
bubbles and the algorithm to select the bubbles to schedule in
Section~\ref{sec:cluster-dyn} below. We impose a limit on the number
of bubbles that are scheduled at any given time to $\maxNbubbles$,
even if the actual number of bubbles in the system were greater, so as
to keep the computational cost manageable. However, in the algorithm
we describe in the sequel, each bubble is scheduled at least once and
some bubbles may be scheduled more than once. We let $t_{s_i}$ denote
the latest time prior to $t$ at which bubble $i$ was scheduled.

We index the vehicles in bubble $i$ as $(i,1), \ldots, (i, m_i)$,
where $(i,1)$ refers to the lead vehicle in bubble $i$ and so on until
$(i,m_i)$, the last vehicle in the bubble. We also find it convenient
for the label $(i,0)$ to represent the last vehicle $(i', m_{i'})$ of
the bubble $i'$ that precedes bubble $i$ on the same branch or, if
such bubble does not exist, we let $(i,0)$ be an imaginary vehicle
located at $\infty$. We drop the index~$i$ whenever there is no
ambiguity with regard to the bubble.

\subsubsection*{Aspect 2 -- scheduling of bubbles}

The job of the scheduler is to prescribe to each bubble an
\emph{approach time} $\tau_i$ - the time at which the $i^{\text{th}}$
bubble is to reach the beginning of the intersection, i.e.,
$x_i(\tau_i) = 0$, so that no two different bubbles collide. In
solving this problem, the scheduler has to respect the order of
bubbles on the same branch and take into account no-collision
constraints between bubbles on two different branches. The
preservation of the order of intersection crossing by the bubbles on
the same branch takes the form,
\begin{subequations}\label{eq:tau-constraints}
  \begin{align}
    &\tau_q \geq \tau_i + \bartauocc_i, \quad \text{if } \Rc(i,q) = 1,
  \end{align}
  for $i,q \in \until{N}$.  Note that these constraints only ensure
  that the passage of bubbles on a branch through the intersection
  occurs in the same order as they have arrived, but they do not
  necessarily exclude collisions for the entire travel time. The
  intra-branch collisions are avoided at a local level and we accept
  the resulting sub-optimality. On the other hand, the no-collision
  constraint between bubbles on two different branches takes the form,
  \begin{align}\label{eq:tau-diff-branch}
    &\tau_i \geq \tau_q + \bartauocc_q \ \text{OR} \ \tau_q \geq
    \tau_i + \bartauocc_i, \quad \text{if } \Ic_i \neq \Ic_q ,
  \end{align}
\end{subequations}
for $i,q \in \until{N}$.  The constraints~\eqref{eq:tau-diff-branch}
make the scheduling problem combinatorial in nature because of the
need to determine whether $i$ or $q$ goes first. Since the order on
each branch is to be preserved, the number of sub-problems is the
number of permutations of the multiset $\{ \Ic_k \}_{k=1}^N$, i.e.,
\begin{equation*}
  \frac{ N! }{ \prod_{k=1}^4 N_k ! } = 
  \frac{ \big( \sum_{k = 1}^4 N_k \big)! }{ \prod_{k=1}^4 N_k ! } ,
\end{equation*}
where recall that $N_k$ is the number of bubbles on branch $k$ and $N$
is the total number of bubbles.  We describe in detail the algorithm
for optimal scheduling of bubbles in Section~\ref{sec:schedule}.

\subsubsection*{Aspect 3 -- local vehicular control}

The local vehicular control has various equally relevant goals. The
first goal is to avoid collisions within each bubble and among
different bubbles in the same branch. The second goal is for the local
vehicular control to ensure that the bubble approaches the
intersection at the prescribed time $\tau_i$ and that the occupancy
time of the bubble, $\tauocc_i$, is no more than $\bartauocc_i$. The
scheduler requires the quantity $\bartauocc_i$ and other quantities
such as earliest and latest times of approach at the intersection for
the bubble that are functions of the initial conditions. All these
quantities may be computed by the bubble and passed on to the IM or,
instead, the state of each car may be passed to the IM.  We assume
that the control law at the vehicle level ensures that a vehicle does
not change bubbles during the course of its travel time. Thus, as far
as the scheduling aspect is concerned, $m_i$ may be assumed constant
in time. We describe in detail the local vehicular control component
in Section~\ref{sec:vehicle-control} below.

\begin{remark}\longthmtitle{Relaxation of
    assumptions}\label{re:assumptions}
  {\rm We discuss here to what extent the assumptions made in
    Section~\ref{sec:problem-statement} can be relaxed in our proposed
    design. We make assumptions (ii) and (iv) only for the sake of
    simpler notation and ease of exposition. Our algorithm can handle
    non-identical vehicles with differing dimensions and differing
    acceleration limits, though those quantities need to be
    known. Simultaneous use of the intersection by vehicles on
    compatible branches/directions is definitely possible in our
    framework and indeed makes the scheduling problem easier. We can
    relax assumption (v) if the sources or sinks are not close to the
    intersection with minor changes in our algorithm for bubble
    generation. We can avoid assumption (iii) and allow turning within
    our framework. However, the differing travel speeds when turning
    and going straight affects the computation of the intersection
    occupancy time, which might make the design conservative.  We
    believe this conservativeness could be addressed by relaxing
    assumption (i) and incorporating multiple lanes into the design.}
  \oprocend
\end{remark}

\section{Dynamic vehicle clustering}\label{sec:cluster-dyn}

The primary motivation for clustering vehicles into bubbles is to
reduce the computational burden on the scheduler. Consequently, we
impose the upper bound $\maxNbubbles$ on the number of bubbles that
the scheduler needs to consider at any given instance. Further, as
new vehicles arrive, they need to be assigned to new bubbles. In order
to balance both requirements, we divide each branch into three zones,
as shown in Figure~\ref{fig:zones}: staging zone (of length $L_s$),
mid zone (of length $L_m$) and exit zone (of length $L_e$). For each
branch $k \in \branches$, we let $\stagingset_k$, $\midset_k$ and
$\exitset_k$ be the set of positions on the branch $k$ corresponding
to the staging, mid and exit zones, respectively.
\begin{figure}[!htpb]
  \centering
  \includegraphics[width=0.9\linewidth]{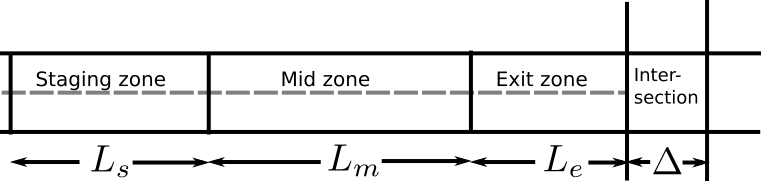}
  \caption{Division of an incoming branch into
    zones.}\label{fig:zones}
\end{figure}

The \clusteralgo algorithm is executed every $\timestepCS$ units of
time. At each clustering instance $t_s = s \timestepCS$, $s \in
\integersnonnegative$, the vehicles in the staging zone that do not
already belong to a bubble are clustered. Thus, the choice
$\timestepCS < \frac{ L_s }{ v^M }$, where recall that $v^M$ is the
max speed limit, ensures that every vehicle belongs to a bubble before
it leaves the staging zone and enters the mid zone. We impose an upper
bound $\Ncbar_k$ on the number of new bubbles that may be created on
branch $k$ at any clustering instance. At a clustering instance $t_s =
s \timestepCS$, let $\Nnewvehicles_k$ denote the number of vehicles to
be clustered in the staging zone of branch~$k$. Then, the
$\Nnewvehicles_k$ vehicles are clustered based on their position using
the $\Mc_k$-means algorithm, with $\Mc_k = \min \{ \Nnewvehicles_k,
\Ncbar_k \}$. Thus, the $\Nnewvehicles_k$ on branch $k$ are
partitioned into $\Mc_k$ number of clusters or bubbles such that the
sum of squares of the distances from each car to the center of its
bubble is minimized, see e.g.,~\cite{SPL:82}.  The clustering
component in our design is modular and hence any clustering algorithm
that is well suited may be used.

The algorithm also makes sure that no more than~$\maxNbubbles$ bubbles
are passed to the IM manager for scheduling at any instance. This is
achieved using two observations. First, previously scheduled bubbles
that have already entered the exit zone of their branch are no longer
fed to the IM for scheduling (i.e., its schedule is not modified any
further). Second, if the number of newly created bubbles and the
previously created bubbles yet to enter the exit zone exceeds
$\maxNbubbles$, then the algorithm pops out the required number of
bubbles from the top of the list of bubbles previously scheduled
(corresponding to the ones closer to their respective exit zones). We
present the precise description of the \clusteralgo algorithm in
Algorithm~\ref{algo:dyn-CS}.

\begin{algorithm}[htb]
  {
    \footnotesize \vspace*{1ex} 
    \begin{algorithmic}[1]
      \REQUIRE 
      $\Lc_p, \taumin_p$
      \\
      \mycomment{Ordered list of bubbles scheduled at $(s-1)
        \timestepCS$ and earliest approach time used in scheduling
        them}
      \STATE $\Lc \gets \Lc_p \setminus \{ j \in \Lc : \Ic_j = k \
      \land \ x_j \notin \stagingset_k \union \midset_k \}$
      \\
      \mycomment{remove bubbles that are not completely within the
        staging and the mid zones}
      \FOR{$k=1$ \TO $4$}
      \STATE $\Ncbar_k$
      \mycomment{max new bubbles on branch $k$}
      \STATE $\Mc_k \gets \min\{ \Nnewvehicles_k, \Ncbar_k \}$
      \mycomment{\# new bubbles on branch $k$}
      \STATE Cluster new vehicles on branch $k$ using $\Mc_k$-means
      algorithm \label{step:k-means}
      \ENDFOR
      \STATE $\displaystyle \Mc \gets \sum_{k=1}^4 \Mc_k$
      \IF{$\Mc + \card{\Lc} > \maxNbubbles$}
      \STATE Remove first $\Mc + \card{\Lc} - \maxNbubbles$ bubbles
      from $\Lc$
      \mycomment{Ensure only $\maxNbubbles$ bubbles provided to
        scheduler by dropping the earliest bubbles in previous
        schedule}
      \ENDIF
      \STATE Append new bubbles to $\Lc$
      \STATE $\taumin \gets \max \left( \{ \taumin_p \} \union
        \{\tau_i + \bartauocc_i : i \in \Lc_p \setminus \Lc \}
      \right)$
      \\
      \mycomment{earliest approach time for the bubbles in~$\Lc$}
      \ENSURE $\Lc$, $\taumin$
    \end{algorithmic}	   
  }
  \caption{\hspace*{-.5ex}: \clusteralgo at~$s
    \timestepCS$}\label{algo:dyn-CS}
\end{algorithm}

The algorithm takes in the list of bubbles $\Lc_p$ scheduled on the
last iteration and an earliest approach time $\taumin_p$ used when
scheduling~it. The output is a list of bubbles $\Lc$ to be scheduled
and the earliest approach time $\taumin$ for them. Note from step 12
of Algorithm~\ref{algo:dyn-CS} that $\taumin$ is an upper bound on the
time by which all the bubbles not in the $\Lc$ list are guaranteed to
cross the intersection. Thus, when scheduling $\Lc$, the scheduler
imposes the constraint that the bubbles in $\Lc$ approach the
intersection no earlier than~$\taumin$.

\begin{remark}\longthmtitle{Effect of zone lengths on clustering and
    scheduling} 
  {\rm The lengths of the three zones illustrated in
    Figure~\ref{fig:zones} directly affect the resulting traffic
    coordination. Although we do not pursue here a systematic design
    of these zone lengths, we can identify some basic observations of
    their effect on clustering and scheduling.  We envision these zone
    lengths to be of the order of several tens of meters. The length
    $L_s$ of the staging zone has a direct effect on the time step of
    the periodic execution of clustering and scheduling as well as on
    the number of vehicles per bubble. The length $L_m$ of the mid
    zone has an effect on the likelihood of revising a bubble's
    schedule on the next iteration. Finally, the length $L_e$ of the
    exit zone has an effect on the feasibility of the scheduling
    problem, which we guarantee by assuming that $L_e$ is large enough
    for a vehicle to come to a complete stop from a maximum speed of
    $v^M$ in under a distance~$L_e$. }  \oprocend
\end{remark}

\begin{remark}\longthmtitle{Re-clustering}
  {\rm The \clusteralgo algorithm is just one method of defining
    bubbles and selecting which ones to select. In this algorithm, a
    vehicle is assigned to a bubble only once and the vehicle is part
    of that bubble through out its travel. However, one could
    implement a strategy which re-clusters all vehicles in the staging
    and mid zones so that vehicles may be reassigned to a different
    bubble, bubbles may be merged or split as needed, and so on. Such
    an algorithm would also allow sources and sinks on the branch such
    as smaller streets, homes, and retail.} \oprocend
\end{remark}

\section{Scheduling of bubbles}\label{sec:schedule}

This section describes the scheduling algorithm employed by the
intersection manager (IM) to decide the order of passage through the
intersection of the bubbles in $\Lc$ provided by the clustering
algorithm. The scheduling algorithm is also executed
every~$\timestepCS $ units of time. In this section, we let $\Lc$ be
the set $\until{N}$, where $N = \card{\Lc}$, without loss of
generality.

\subsection{Cost function and constraints}\label{sec:simplify-cost}

In our approach, the IM schedules bubbles as a whole using an
abstraction of the vehicle dynamics and the cost function. First,
regarding the vehicle dynamics, we note that the inter-vehicle
approach times at the intersection and the resulting occupancy time of
a bubble is a degree of freedom. However, we have made the alternative
choice of not considering it as such in the scheduling algorithm, and
instead only use an upper bound on the occupancy time $\bartauocc_i$
(that the local vehicular control component can guarantee) appearing
in the constraints~\eqref{eq:tau-constraints}.  Second, regarding the
cost function, we abstract the fuel cost for the vehicles in a bubble
$i$ into a single function $F_i$ that depends only on the average
velocity of the bubble $i$ (lead vehicle in the bubble) for $t \in
[t_s, t_s + \tau_i]$, where $t_s = s \timestepCS$ is the time at which
the scheduling algorithm is executed.  Thus, the scheduling algorithm
minimizes the following simplified cost function~$\Cc \triangleq
\Cc_\Lc$ where $\Cc_\Pc$ for a given list of bubbles $\Pc$ is
\begin{align}\label{eq:simp-cost-fun}
  \Cc_\Pc &\triangleq \sum_{i \in \Pc} m_i ( W_T \tau_i +
  F_i(\avgvel_i) ) \notag
  \\
  &= \sum_{i \in \Pc} m_i \Big( W_T \frac{d_i}{\avgvel_i} +
  F_i(\avgvel_i) \Big) \triangleq \sum_{i \in \Pc} \phi_i(\avgvel_i) ,
\end{align}
where $\avgvel_i$ is the average velocity of the lead vehicle in
bubble $i$ for $t \in [t_s, t_s + \tau_i]$, i.e.,
$\avgvel_i = \frac{ d_i }{ \tau_i }$, where $d_i \triangleq -
x_i(t_s)$. The optimization variables are $\avgvel_i$ 
for each bubble~$i \in \Lc$.

Note that in the cost function $\Cc$, the functions $F_i$ could, in
general, depend on initial conditions modeled as parameters - such as
the distance $d_i$ to reach the intersection.  The cost
function~\eqref{eq:simp-cost-fun} models a combination of cumulative
travel time and total fuel usage.  Motivated by the fact that fuel
efficiency is typically an increasing function of vehicle speed for
speeds under the limits enforced on most roads with intersections, we
make the assumption that, for each $i \in \Lc$, $F_i: [ 0, v^M ]
\mapsto \realpositive$ is monotonically decreasing.

Regarding the constraints, conditions on the travel times can be
re-expressed as conditions on average velocities as
\begin{align}\label{eq:cji-bji}
  &\tau_i \geq \tau_q + \bartauocc_q \iff \frac{ d_i }{ \avgvel_i }
  \geq \frac{ d_q }{ \avgvel_q } + \bartauocc_q \notag
  \\
  & \iff \avgvel_q \geq c_{qi} \avgvel_i + b_{qi} \avgvel_q \avgvel_i,
  \quad c_{qi} = \frac{d_q}{d_i}, \ b_{qi} = \frac{ \bartauocc_q }{
    d_i } .
\end{align}
Thus, we re-express the no-collision
constraints~\eqref{eq:tau-constraints} as
\begin{subequations}\label{eq:v-constraints}
  \begin{align}
    &\avgvel_i \geq c_{iq} \avgvel_q + b_{iq} \avgvel_i \avgvel_q,
    \quad \text{if } \Rc(i,q) = 1, \label{eq:v-constraints-a}
    \\
    &\avgvel_q \geq c_{qi} \avgvel_i + b_{qi} \avgvel_q \avgvel_i \
    \text{OR} \ \avgvel_i \geq c_{iq} \avgvel_q + b_{iq} \avgvel_i
    \avgvel_q, \; \text{if } \Ic_i \neq \Ic_q . \label{eq:v-constraints-b}
  \end{align}
  In addition, we also need to ensure that the scheduling at instance
  $s \timestepCS$ of the bubbles in $\Lc$ does not conflict with the
  ones that have been previously scheduled. Formally, this corresponds
  to having the time $\tau_i$ to reach the intersection for bubble $i$
  be no less than $\taumin$ (cf.  step 12 of
  Algorithm~\ref{algo:dyn-CS}). Equivalently, we require
  \begin{equation}\label{eq:v-constraints-c}
    \avgvel_i \leq \frac{ d_i }{ \taumin } .    
  \end{equation}
\end{subequations}

Note that the scheduling problem is combinatorial in nature due to the
no-collision constraints~\eqref{eq:v-constraints-b}. Thus, even though
the cost function $\Cc$ is simple and the optimization variables are
the average velocities $\avgvel_i$, we believe this formulation
provides a good balance between usefulness and computational
tractability. Further, the local vehicular control we present in
Section~\ref{sec:vehicle-control} seeks an optimal control profile to
achieve the prescribed average velocity for the bubble, which
justifies the restriction to $\avgvel_i$ as the optimization variables
in the scheduling aspect. Thus our proposed solution, although
sub-optimal, is still principled.

We next describe our solution to the scheduling problem consisting of
minimizing~$\Cc = \Cc_\Lc$ in~\eqref{eq:simp-cost-fun} under the
constraints~\eqref{eq:v-constraints} and $\avgvel_i \in [ \lowvel_i,
\hivel_i ]$.  The lower $\lowvel_i \ge 0$ and upper $\hivel_i \le v^M$
limits on the average velocity depend on the initial conditions of the
vehicles and desired speed limits. The quantities $\lowvel_i$ and
$\hivel_i$ are inversely related to the \emph{latest time of approach}
and the \emph{earliest time of approach} at the intersection for
bubble $i$, respectively. The computation of these quantities is
described in Section~\ref{sec:hivel-lowvel}. Similarly, the upper
bound $\bartauocc_i$ on the occupancy times may be computed as in
Section~\ref{sec:bartauocc}.  In the first part of our solution to the
scheduling problem, we determine the optimal schedule and optimal cost
given a fixed order of bubble passage through the intersection. In the
second part, we use a branch-and-bound algorithm to find the optimal
order and schedule.

\subsection{Optimal bubble average velocity for fixed order of
  passage}

Here we address the problem of determining, given a desired order of
bubble passage through the intersection, the optimal average
velocities of the bubbles and the associated optimal cost. For this
purpose, define an \emph{order} of the approach times of the bubbles
as a permutation, $P$, of the integers from $1$ to $\card{P} \leq N$.
We use $P(i)$ to denote the $i^{\text{th}}$ element in the order, with
the bubble $P(1)$ passing through the intersection first and so on. We
use $\sigP{P}(i)$ to denote the position of bubble $i$ in the
order~$P$. Clearly, for a permutation to respect the intra-branch
orders, $\sigP{P}(i) < \sigP{P}(q)$ if $\Rc(i,q) = 1$. Given~$P$
respecting the intra-branch orders, the \veloptalgo algorithm,
formally described in Algorithm~\ref{algo:optim-given-P}, finds a
solution to the optimization of $\Cc_P$ under the
constraints~\eqref{eq:v-constraints}, $\avgvel_i \in [ \lowvel_i,
\hivel_i ]$, and with order~$P$.

\begin{algorithm}[htb]
  {
    \footnotesize \vspace*{1ex} 
    \begin{algorithmic}[1]
      \REQUIRE Order $P$
      \STATE $C \gets 0$
      \FOR{$h=1$ \TO $\card{P}$} 
      \STATE $i \gets P(h)$
      \mycomment{bubble $i$ is in position $h$ in $P$}
      \IF{$h = 1$}
      \STATE $\avgvel_i^P \gets \hivel_i$
      \ELSE
      \STATE $q \gets P(h-1)$
      \mycomment{bubble $q$ is in position $h-1$ in $P$}
      \STATE $\avgvel_i^P \gets \min \{ \hivel_i, \frac{ \avgvel_q^P }{
        c_{qi} + b_{qi} \avgvel_q^P } \}$
      \ENDIF
      \mycomment{$\avgvel_i^P$ is the optimizer for bubble $i$}
      \STATE $C \gets C + \phi_i(\avgvel_i^P)$
      \mycomment{update cost}
      \ENDFOR      
    \end{algorithmic}	   
  }
  \caption{\hspace*{-.5ex}: \veloptalgo}\label{algo:optim-given-P}
\end{algorithm}

The following result shows that, for an order that respects the
intra-branch order, the algorithm finds the average velocities that
optimize the cost function $\Cc_P$.

\begin{lemma}\longthmtitle{\veloptalgo algorithm
    optimizes the schedule given an overall order that respects the
    intra-branch orders}
  For each $i \in \until{N}$, assume the fuel cost function $F_i$ is
  monotonically decreasing.  Let $P$, with $|P| \le N$, be an order
  respecting the intra-branch orders and denote by $\avgvel^P =
  (\avgvel_1^P, \ldots, \avgvel_N^P)$ and $C$ the output of
  Algorithm~\ref{algo:optim-given-P}. Then, $\avgvel^P$ and $C$ are,
  respectively, the minimizer and the minimum cost of the optimization
  problem with the cost function as $\Cc_P$~\eqref{eq:simp-cost-fun}
  under the constraints~\eqref{eq:v-constraints}, $\avgvel_i \in [
  \lowvel_i, \hivel_i ]$.
\end{lemma}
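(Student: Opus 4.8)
The plan is to exploit two structural facts. First, the objective $\Cc_P=\sum_{i\in P}\phi_i(\avgvel_i)$ in \eqref{eq:simp-cost-fun} is separable and each summand is strictly decreasing in its own velocity, since $F_i$ is monotonically decreasing and the travel-time term $W_T d_i/\avgvel_i$ is strictly decreasing for $\avgvel_i>0$; hence minimizing $\Cc_P$ is the same as making every $\avgvel_i$ as large as the constraints permit. Second, once the order $P$ is fixed, the feasible set is a ``chain'' in which each bubble is bounded only by its immediate predecessor in $P$. Accordingly, I would show that the greedy output of Algorithm~\ref{algo:optim-given-P} is the componentwise-largest feasible point, and then read off both feasibility and optimality.

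The first step is to reduce the constraint set. Fix $h\ge 2$ and set $q=P(h-1)$, $i=P(h)$. Because $P$ respects the intra-branch orders, same-branch bubbles appear in $P$ in their branch order, so if $q$ and $i$ lie on the same branch they must be immediate branch-neighbors (an intervening branch-mate would need position strictly between $h-1$ and $h$); thus $\Rc(q,i)=1$ and \eqref{eq:v-constraints-a} applies. If instead they lie on different branches, the order $P$ selects the disjunct of \eqref{eq:v-constraints-b} in which $q$ precedes $i$. In either case the active constraint is $\tau_i\ge\tau_q+\bartauocc_q$, i.e. $\avgvel_{P(h)}\le \avgvel_{P(h-1)}/\big(c_{qi}+b_{qi}\avgvel_{P(h-1)}\big)$, which is exactly the bound used in the algorithm. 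I would then show that every remaining constraint of \eqref{eq:v-constraints-a}--\eqref{eq:v-constraints-b} is implied by these consecutive ones: for any pair with $\sigP{P}(q)<\sigP{P}(i)$, telescoping the consecutive inequalities yields $\tau_i\ge\tau_q+\bartauocc_q$ because every intervening occupancy bound is nonnegative, which is precisely the order-resolved constraint on that pair. Since each consecutive inequality is itself one of the original constraints, the fixed-order feasible set equals the set of points satisfying the consecutive bounds together with $\avgvel_i\in[\lowvel_i,\hivel_i]$; the cap \eqref{eq:v-constraints-c} is absorbed into the upper box bound, the earliest-approach bound $\hivel_i$ already enforcing $\avgvel_i\le d_i/\taumin$.

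The key monotonicity is that $g_h(v)\triangleq v/(c_{qi}+b_{qi}v)$ is strictly increasing in $v$, as its derivative equals $c_{qi}/(c_{qi}+b_{qi}v)^2>0$ (recall $c_{qi}=d_q/d_i>0$ and $b_{qi}=\bartauocc_q/d_i\ge 0$, cf.~\eqref{eq:cji-bji}). Hence raising an earlier velocity only loosens the bound on the next bubble, so there is no tension between maximizing one velocity and the following ones. Using this, I would prove by induction on $h$ that every feasible $\avgvel$ satisfies $\avgvel_{P(h)}\le\avgvel_{P(h)}^P$: the base case $h=1$ holds since $\avgvel_{P(1)}\le\hivel_{P(1)}=\avgvel_{P(1)}^P$, and the inductive step combines $\avgvel_{P(h)}\le\min\{\hivel_{P(h)},g_h(\avgvel_{P(h-1)})\}$ with the hypothesis $\avgvel_{P(h-1)}\le\avgvel_{P(h-1)}^P$ and the monotonicity of $g_h$ to conclude $\avgvel_{P(h)}\le\avgvel_{P(h)}^P$.

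Finally I would close the argument. The dominance just established shows $\avgvel^P$ is componentwise at least as large as any feasible point; in particular, whenever the problem is feasible we get $\avgvel_{P(h)}^P\ge\lowvel_{P(h)}$ for all $h$, and since the upper and consecutive constraints hold by construction, $\avgvel^P$ is itself feasible. As each $\phi_i$ is decreasing, componentwise dominance gives $\Cc_P(\avgvel^P)\le\Cc_P(\avgvel)$ for every feasible $\avgvel$, so $\avgvel^P$ is a minimizer and $C$ the minimum (strict monotonicity of the travel-time term yielding uniqueness when $W_T>0$). I expect the main obstacle to be the constraint-reduction step: one must carefully justify that consecutive pairs in $P$ are exactly intra-branch immediate pairs or inter-branch pairs, and that telescoping collapses all ``skip'' constraints; the strict positivity of the velocities (so the equivalences in \eqref{eq:cji-bji} are valid) and the absorption of \eqref{eq:v-constraints-c} into the box bound also require care.
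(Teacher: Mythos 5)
Your proposal is correct and takes essentially the same route as the paper's proof: reduce the fixed-order constraints to the consecutive chain $\avgvel_{P(h)} \le \avgvel_{P(h-1)}/(c_{qi}+b_{qi}\avgvel_{P(h-1)})$, observe that $v \mapsto v/(c_{qi}+b_{qi}v)$ is increasing, and conclude that the greedy componentwise-maximal point minimizes the separable decreasing cost. You merely spell out steps the paper's terse argument leaves implicit (that consecutive pairs in $P$ are exactly the binding constraints, the telescoping of skipped pairs, the dominance induction, and feasibility with respect to the lower bounds $\lowvel_i$), which is a faithful elaboration rather than a different approach.
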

\begin{proof}
  Given the order $P$, the constraints~\eqref{eq:v-constraints}
  reduce to
  \begin{equation*}
    \frac{ \avgvel_q }{ c_{qi} + b_{qi} \avgvel_q } \geq \avgvel_i
  \end{equation*}
  where $q = P(h-1)$, $i = P(h)$ and $h \in \{2, \ldots, N\}$. The
  left-hand side of the inequality is an increasing function of
  $\avgvel_j$. Further since $F_i$ is a monotonically decreasing
  function for each $i$, $\avgvel_i^P$ takes the maximum possible
  value. The algorithm computes the components of $\avgvel^P$
  iteratively and the result follows.
\end{proof}

\subsection{Optimal ordering via branch-and-bound}

We propose a branch-and-bound algorithm to solve the optimal
scheduling problem.  We start by providing an informal description.
\begin{quote}
  \emph{Informal description:} A branch-and-bound algorithm consists
  of a systematic enumeration of the set of candidate solutions as a
  rooted tree, with the full set at the root.  The algorithm explores
  branches of the tree, which represent subsets of the set of
  candidate solutions.  Before enumerating the candidate solutions of
  a branch, the branch is checked against upper bounds on the optimal
  solution, and is discarded if it is determined that it cannot
  produce a better solution than the best one found so far.
\end{quote}

We formally specify each of the components in this description next,
starting with the rooted tree.  We let $\Pc$ denote any ordered list
of up to length $N$, with non-repeating numbers drawn from $\{1,
\ldots, N\}$, and preserving the individual branch orders. With this
notation, the empty list $\Pc = \emptyset$ denotes the root of the
tree, representing all feasible orders. Similarly, $\Pc = (i_1,
\ldots, i_h)$ denotes the subtree of all the feasible orders in which
bubble $i_1$ crosses the intersection first, $i_2$ second, and so on,
until bubble $i_h$ is the $h^\text{th}$ to cross, with the order of
the remaining bubbles undetermined.

Our next step is to provide a way to determine a lower bound on the
achievable optimal value of any given branch. This follows from the
observation that (i) the execution of the \veloptalgo algorithm finds
the optimal value of the average velocity a bubble given the order of
\emph{all} the bubbles preceding it, but (ii) one can compute an upper
bound for the optimal value even if only part of the order of bubbles
preceding it is known.  The description in
Algorithm~\ref{algo:update-hilim} of this procedure, termed
\velboundalgo algorithm, relies on four ordered lists, termed queues,
one for each branch.
\begin{algorithm}[htb]
  {
    \footnotesize \vspace*{1ex} 
    \begin{algorithmic}[1]
      \STATE $l \gets \last{\Pc}$
      \mycomment{$l$ is last bubble in $\Pc$}
      \STATE Compute $\avgvel_l^{\Pc}$ using \veloptalgo with input
      $\Pc$
      \FOR{$k=1$ \TO $4$}
      \STATE $\Qc_k \gets Q_k \setminus \Pc$
      \mycomment{pop-out $\Pc$ from $Q_k$}
      \IF{$\Qc_k \neq \emptyset$}
      \STATE $i \gets \first{\Qc_k}$
      \mycomment{$i$ is first of remaining bubbles in  $\Qc_k$}
      \STATE $\hilim{\Pc}_i \gets \min \{ \hivel_i, \frac{
        \avgvel_l^{\Pc} }{ c_{li} + b_{li} \avgvel_l^{\Pc} } \}$
      \FOR{$s=2$ \TO $\Len{\Qc_k}$} 
      \STATE $i \gets \Qc_k(s)$ 
      \STATE $q \gets \Qc_k(s-1)$
      \STATE $\hilim{\Pc}_i \gets \min \{ \hivel_i, \frac{
        \hilim{\Pc}_q }{ c_{qi} + b_{qi} \hilim{\Pc}_q } \}$
      \ENDFOR
      \ENDIF
      \ENDFOR
    \end{algorithmic}	   
  }
  \caption{\hspace*{-.5ex}: \velboundalgo}\label{algo:update-hilim}
\end{algorithm}
The queue for branch $k$, $Q_k = ( i_{k,1}, \ldots, i_{k,N_k})$, is
initialized to the list of all the bubbles on branch $k$ in their
order of arrival (thus $\Rc(i_{k,q}, i_{k,q+1}) = 1$ for all $q \in \{
1, \ldots, N_k - 1\}$).  We denote by $\hilim{\Pc}_i$ the upper bound
on the average velocity $\avgvel_i$ of bubble $i$ obtained by
Algorithm~\ref{algo:update-hilim} given that a non-empty $\Pc$
precedes it.  This allows us to lower bound the optimal cost for any
order in the subtree $\Pc$ in terms of $\avgvel_i^{\Pc}$ and
$\hilim{\Pc}_i$ as follows,
\begin{equation} \label{eq:bnd-optim-cost-subtree} C^{\Pc} \triangleq
  \sum_{i \in \Pc} \phi_i( \avgvel_i^{\Pc} ) + \sum_{i \in \Lc
    \setminus \Pc} \phi_i( \hilim{\Pc}_i ).
\end{equation}
This lower bound is precisely what is required to implement a
branch-and-bound algorithm to find the optimal schedule for the
bubbles.

Specifically, the branch-and-bound algorithm starts by picking an
arbitrary candidate order and computing the cost for it, using the
\veloptalgo algorithm, and storing the two as the current best
solution and cost. Then, starting at the root node of the tree of all
feasible orders, the algorithm searches (e.g., using depth-first or
breadth-first search) for an optimal solution. If at any time a leaf
node, which corresponds to a fully determined order, is reached and
its cost is better than the current best, then the current best
solution and cost are updated. For any other node $\Pc$ in the
tree,~\eqref{eq:bnd-optim-cost-subtree} provides a lower bound
$C^{\Pc}$ on the cost of all the orders represented by the node $\Pc$.
If $C^{\Pc}$ is greater than the current best known cost, then the
subtree $\Pc$ is discarded. This process continues until the algorithm
finds the optimal solution. We refer to this process as the \schedalgo
algorithm.

\section{Local vehicular control}\label{sec:vehicle-control}

The local vehicular control component of our hierarchical-distributed
coordination approach involves two main tasks: (i) compute, for each
bubble $i$, the lower $\lowvel_i$ and upper $\hivel_i$ average
velocity bounds, and the upper bound on the intersection occupancy
time $\bartauocc_i$ that are provided to the scheduler; and (ii)
control the vehicles ensuring no collisions and that all the vehicles
of bubble $i$ cross the intersection within the time interval
$[\tau_i, \tau_i + \bartauocc_i]$ prescribed by the scheduler. The
successful execution of each of these tasks requires an understanding
of the vehicle dynamics and the desired safety constraints and the
effect of each on the other.  We achieve this by resorting to the
controller design in our previous work~\cite{PT-JC:17-tcns} on vehicle
strings under finite-time and safety specifications. For the sake of
completeness, we review here the main elements and performance
guarantees of this design as needed by our overarching
hierarchical-distributed approach.

\subsection{Bounds on average bubble velocity}\label{sec:hivel-lowvel}

Recall that $\avgvel_i$ is the average velocity of the lead vehicle of
bubble $i$ from $t_s$ and until the lead vehicle is supposed to reach
the beginning of the intersection at $\tau_i$. Thus, it would seem
that computing lower and upper bounds on the achievable average
velocity of the lead vehicle in the bubble is sufficient to determine
$\hivel_i$ and $\lowvel_i$. However, ignoring the initial conditions
of the other vehicles in the bubble in the computation of $\hivel_i$
and $\lowvel_i$ poses the risk of lengthening the guaranteed upper
bound $\bartauocc_i$ on the occupancy time. The reasoning for this is
better explained in terms of earliest times of approach at the
intersection of the vehicles.

In bubble $i$, we let $\taue_{i,j}$ be the earliest time vehicle
$(i,j)$ can reach the intersection ignoring the other vehicles on the
branch. Letting $t_{s_i} = s_i \timestepCS$ be the time at which
bubble $i$ was last scheduled, the quantity $\taue_{i,j} - t_{s_i}$ is
then the time it takes $\xv_{i,j}$ to reach $0$ from
$\xv_{i,j}(t_{s_i})$ for the trajectory with maximum acceleration
until $\vv_{i,j} = v^M$ and zero acceleration thereafter. Thus, we see
that if $\taue_{i,j}$ for some $j > 1$ is significantly greater than
$\taue_{i,1}$ then the vehicle $(i,1)$ has to slow down to approach
the intersection at a time later than $\taue_{i,1}$ so that the
guaranteed upper bound $\bartauocc_i$ on the occupancy time is small
enough.  Thus, we propose an alternative solution. To do so, we first
introduce the notion of safe-following distance.

\begin{definition}\longthmtitle{Safe-following
    distance}\label{dfn:sfd}
  The maximum braking maneuver (MBM) of a vehicle is a control action
  that sets its acceleration to $u_m$ until the vehicle comes to a
  stop, at which point its acceleration is set to $0$ thereafter.  Let
  $j-1$ and $j$ be the indices of two vehicles on the same branch,
  with vehicle $j$ immediately following $j-1$, and define
  \begin{multline}\label{eq:sf-dist}
    \Dc(\vv_{j-1}(t), \vv_j(t)) =
    \\
    L + \max \Big\{ 0, \frac{ 1 }{ -2u_m } \left( (\vv_{j}(t))^2 -
      (\vv_{j-1}(t))^2 \right) \Big\} .
  \end{multline}
  The quantity $\Dc(\vv_{j-1}(t), \vv_j(t))$ is \emph{a safe-following
    distance} at time~$t$ for the pair of vehicles $j-1$ and $j$
  because if $\xv_{j-1}(t) - \xv_j(t) \geq \Dc(\vv_{j-1}(t),
  \vv_j(t))$ and, if each of the two vehicles were to perform the MBM,
  then the two vehicles would be safely separated, $\xv_{j-1} - L \geq
  \xv_j$ until they come to a stop. \oprocend
\end{definition}

Now, assuming a nominal speed $\nunom$ for vehicles when entering the
intersection, we set $\Dcnom \triangleq \Dc(\nunom, v^M)$, which has
the connotation of a safe inter-vehicle distance given a vehicle is
traveling at the maximum allowed speed $v^M$ and the vehicle preceding
it traveling at a speed greater than or equal to $\nunom$. Then, we
also define $\Tnom \triangleq \Dcnom / \nunom$ as the \emph{nominal
  inter-vehicle approach time}. With this nominal inter-vehicle
approach times of vehicles in a bubble we see that the earliest time
of approach for vehicle $(i,j)$ forces the earliest time of approach
of bubble $i$, i.e. vehicle $(i,1)$, is no less than $\taue_{i,j} -
(j-1) \Tnom$. Hence, we define \emph{earliest time of approach} for
the bubble $i$, $\taue_i$ as
\begin{equation}\label{eq:taum}
  \taue_i \triangleq \max \{ \taue_{i,j} - (j-1) \Tnom :
  j \in \{1, \ldots, m_i\} \} ,
\end{equation}
and let $\hivel_i = \frac{ -x_i(t_s) }{ \taue_i }$.  Analogous
computations with maximum deceleration yield the \emph{latest time of
  approach} $\taul_i$ of bubble $i$, possibly with $\taul_i = \infty$,
and the corresponding lower bound $\lowvel_i \geq 0$ for the average
velocity. Hence, the values we obtain in this way for $\lowvel_i$ and
$\hivel_i$ are, respectively, larger and smaller than the ones we
would have obtained if we only took into account the lead vehicle of
the bubble.

For a given upper bound on the occupancy time and the sets of
$\lowvel_i$ and $\hivel_i$ for $i \in \Lc$, a feasible schedule might
not always exist. Thus, to guarantee the feasibility of the scheduling
problem in a simple fashion, we assume that the exit zone length $L_e$
is large enough. 

\begin{lemma}\longthmtitle{Existence of a feasible
    schedule}\label{lem:zone-length-assump}
  If the exit zone length, $\displaystyle L_e \geq \frac{ (v^M)^2 }{
    -2u_m } + \frac{ (\nunom)^2 }{ 2u_M }$, then there always exists a
  feasible schedule with which each vehicle is able to enter the
  intersection with a speed of at least $\nunom$.
\end{lemma}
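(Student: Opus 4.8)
The plan is to reduce the claim to a single kinematic fact about the exit zone and then construct a feasible schedule explicitly. The central observation is that the stated lower bound on $L_e$ is exactly the sum of the distance needed to brake from the top speed $v^M$ to rest and the distance needed to accelerate from rest to $\nunom$. I would first show that, as a consequence, the latest time of approach $\taul_i$ of every bubble is $+\infty$; once every bubble can delay its crossing arbitrarily while still arriving at speed at least $\nunom$, a feasible schedule can be produced greedily.

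First I would establish the exit-zone maneuver. Consider any vehicle at the moment it reaches the boundary of the exit zone, i.e., at position $-L_e$, with speed at most $v^M$. Applying the maximum braking maneuver of Definition~\ref{dfn:sfd}, the vehicle comes to rest after traveling at most $\frac{(v^M)^2}{-2u_m}$, so it stops at a position no closer to the intersection than $-L_e + \frac{(v^M)^2}{-2u_m}$. The distance still available before the intersection is therefore at least $L_e - \frac{(v^M)^2}{-2u_m} \ge \frac{(\nunom)^2}{2u_M}$, where the last inequality is precisely the hypothesis on $L_e$. Since $\frac{(\nunom)^2}{2u_M}$ is exactly the distance required to accelerate from rest to $\nunom$ at the maximum rate $u_M$, the vehicle can wait at rest for an arbitrarily long time and then accelerate so as to reach the intersection at position $0$ with speed at least $\nunom$. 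Applied to the lead vehicle of a bubble, this shows that the bubble's approach time can be pushed arbitrarily far into the future while the lead vehicle still crosses at speed at least $\nunom$; equivalently $\taul_i = +\infty$, so that the lower velocity bound $\lowvel_i$ may be taken equal to $0$.

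With all latest approach times infinite, I would build a feasible schedule by a greedy ordering. Fix any order $P$ of the $N$ bubbles respecting the intra-branch orders, for instance any interleaving of the four arrival queues $Q_k$. Assign $\tau_{P(1)} = \max\{ \taue_{P(1)}, \taumin \}$ and, for $h \ge 2$, $\tau_{P(h)} = \max\{ \taue_{P(h)}, \taumin, \tau_{P(h-1)} + \bartauocc_{P(h-1)} \}$. By construction each $\tau_i \ge \taumin$, so the earliest-approach requirement~\eqref{eq:v-constraints-c} holds, and consecutive bubbles in $P$ are separated by at least the occupancy time of the earlier one, so the intra-branch constraints~\eqref{eq:tau-constraints} hold and, since $P$ totally orders the bubbles, one disjunct of the inter-branch constraint~\eqref{eq:tau-diff-branch} is satisfied for every pair on different branches. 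Each $\tau_i$ is finite with $\tau_i \ge \taue_i$ and hence lies in the achievable interval $[\taue_i, \taul_i] = [\taue_i, +\infty)$; by the previous paragraph every vehicle can then cross at speed at least $\nunom$. This exhibits the desired feasible schedule.

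The only delicate point, and the one I expect to require the most care, is extending the kinematic argument from the lead vehicle to the non-lead vehicles of a bubble: while the lead vehicle's stop-and-go maneuver is immediate, the followers must brake to rest maintaining the safe-following distance of Definition~\ref{dfn:sfd} and then re-accelerate without violating it. I would dispatch this by noting that a follower comes to rest strictly behind the lead and therefore has at least as much distance before the intersection as the lead vehicle, so it can likewise reach $\nunom$, and by invoking the local vehicular control of Section~\ref{sec:vehicle-control}, which is designed to preserve safe-following distances and to realize the prescribed interval $[\tau_i, \tau_i + \bartauocc_i]$, as the certificate that the coordinated stop-and-go of the entire bubble is admissible.
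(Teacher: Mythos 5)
Your proof is correct and takes essentially the same route as the paper's: the paper's entire argument is the kinematic observation that the bound on $L_e$ equals the braking distance from $v^M$ plus the acceleration distance to $\nunom$, so any vehicle scheduled while still in the staging or mid zone ($\xv_j \leq -L_e$) can come to a complete stop and then reach the intersection at speed at least $\nunom$ --- equivalently, $\taul_i = \infty$ for every bubble. Your explicit greedy construction of the schedule and the treatment of follower vehicles merely spell out what the paper leaves implicit, and they check out.
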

\begin{proof}
  Recall, that a schedule to a bubble is assigned when all the
  vehicles in the bubble are still in the staging or the mid zones.
  Clearly, the condition on $L_e$ implies that any vehicle in the
  staging zone or the mid zone ($\xv_j \leq - L_e$) can come to a
  complete stop and then accelerate to a speed of at least $\nunom$
  before arriving at the beginning of the intersection ($\xv_j =
  0$).
\end{proof}

\subsection{Vehicle controller design}

The scheduler prescribes for each bubble a time at which the vehicles
in the bubble may start to cross the intersection. The local vehicular
control must ensure that the vehicles of bubble $i$ start and finish
crossing the intersection within the time interval $[\tau_i, \tau_i +
\bartauocc_i]$ while respecting the safety
constraints~\eqref{eq:sf-dist}. In this section, we describe an
algorithm to achieve this task. The algorithm has three main parts:
(i) an uncoupled controller ensuring that the vehicle arrives at the
intersection at a designated time if the presence of all other
vehicles is ignored. This controller is applied when the preceding
vehicle is sufficiently far in front, (ii) a safe-following controller
ensuring that the vehicle follows the preceding vehicle safely when
the latter is not sufficiently far in front; and (iii) a rule to
switch between the two controllers.

\subsubsection{Uncoupled controller}

For each vehicle $j \in \{1, \ldots, m_i\}$ in bubble $i$, we define,
\begin{equation}\label{eq:tau-ik}
  \tau_{i,j} \triangleq \tau_i + (j - 1) \Tnom .
\end{equation}
Given the constraints that the scheduler takes into account, we have
$\tau_i \in [ \taue_i, \taul_i ]$. This, together
with~\eqref{eq:taum}, implies that $\tau_{i,j} \in [ \taue_{i,j},
\taul_{i,j} ]$. Now, let
\begin{equation*}
  (t, \xv_{i,j}, \vv_{i,j}) \mapsto \guc(\tau_{i,j}, t, \xv_{i,j},
  \vv_{i,j}) 
\end{equation*}
be a feedback controller that ensures $\xv_{i,j}(\tau_{i,j}) = 0$ for
the dynamics~\eqref{eq:vehicle-dyn} starting from the current state
$(\xv_{i,j}(t), \vv_{i,j}(t))$ at time $t$ (assuming feasibility),
respecting the control and velocity constraints, but not necessarily
the inter-vehicle safety constraints. We refer to it as the
\emph{uncoupled} controller.  Such a controller exists for each
vehicle at least at $t = t_{s_i}$, where $t_{s_i} = s_i \timestepCS$
is the time at which bubble $i$ was scheduled, due to the fact that
$\tau_{i,j} \in [ \taue_{i,j}, \taul_{i,j} ]$. Here, we take as $\guc$
the optimal feedback controller that generates velocity profiles as
shown in Figure~\ref{fig:pwc-control} obtained by optimizing
\begin{align*}
  \int_{t}^{\tau_j} |\uv_j(s)| ds
\end{align*}
with optimization variables $a_1$, $a_2$ (the areas of the indicated
triangles), $\vv_j(\tau_j)$, $\nu^l$ and $\nu^u$, where we have
dropped the bubble index $i$. The constraints are $\nu^l \in [0,
\vv_j(t)]$, $\nu^u \in [\vv_j(t), v^M]$, $\vv_j(\tau_{i,k}) \in
[\nunom, v^M]$, $a_1, a_2 \geq 0$ and that the total area under the
curve must be equal to $-\xv_j(t)$. The feedback controller may be
found by tabulating the optimal control solution.

\begin{figure}[!htb]
  \centering \subfigure[\label{fig:pwc_down}]
  {\includegraphics[width=0.35\textwidth]{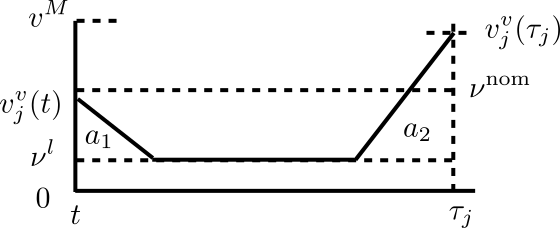}}

  \subfigure[\label{fig:pwc_up}]
  {\includegraphics[width=0.35\textwidth]{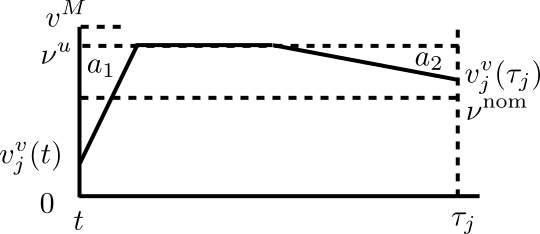}}
  \vspace*{-1ex}
  \caption{Candidate velocity profiles to obtain $\guc$, which takes
    form (a) or (b) depending on the velocity $\vv_j(t)$, $\nunom$,
    $v^M$, $\tau_j$ and the distance to
    go~$-\xv_j(t)$. } \label{fig:pwc-control}
\end{figure}


Note that the control $\guc$ assumes the presence of no other vehicles
on the branch. Thus, the actual approach time, $\Ta_{i,j}$, of the
vehicle $(i,j)$ may be later than $\tau_{i,j}$. At time $t_{s_i}$,
when the bubble is scheduled, an optimal control $\guc$ does exist for
each of its vehicles because of the way the times $\tau_{i,j}$ are
defined in~\eqref{eq:tau-ik}. However, at a future time $t$, such a
feasible $\guc$ might not exist because the vehicle is slowed down by
preceding vehicles and no control exists to ensure $\Ta_{i,j} =
\tau_{i,j}$ along with the other constraints. Additionally, for $t >
\Ta_{i,j}$, i.e., after the vehicle enters the intersection, the
optimal controller is not well defined and does not exist.  As a
shorthand notation, we use $\exists \Fc_{i,k}$ (respectively $\nexists
\Fc_{i,k}$) to denote the existence (respectively, lack thereof) of an
optimal control $\guc$. In order for the control $\guc$ to be well
defined at all times, we let
\begin{equation*}
  \guc(\tau_{i,k}, t, \xv_{i,k}, \vv_{i,k}) \triangleq
  u_M, \quad \text{if } \nexists \Fc_{i,k} .
\end{equation*}

\subsubsection{Controller for safe following}

As mentioned earlier, this controller is applied only when a vehicle
is sufficiently close to the vehicle preceding it. Besides maintaining
a safe-following distance, the controller must also ensure that the
resulting evolution of the vehicles in the bubble $i$ is such that the
occupancy time is no more than $\bartauocc_i$. Here, we present a
design to achieve these goals. For a pair of vehicles $j-1$ and $j$,
with $j$ following $j-1$, we define the \emph{safety ratio} as
\begin{equation}\label{eq:sigma}
  \sigma_j(t) \triangleq \frac{ \xv_{j-1}(t) - \xv_j(t) }{ 
    \Dc(\vv_{j-1}(t), \vv_j(t)) } ,
\end{equation}
which is the ratio of the actual inter-vehicle distance to the
safe-following distance. Hence, we would like to maintain this
quantity above $1$ at all times. Notice from~\eqref{eq:sf-dist} that
if $\vv_{j-1}(t) > \vv_j(t)$, then $\sigma_j$ increases and safety is
guaranteed. Thus, it is sufficient to design a controller that ensures
safe following when $\vv_j(t) \geq \vv_{j-1}(t)$. For vehicle $j$, we
denote $\zeta_j \triangleq ( \vv_{j-1}, \vv_j, \sigma_j )$. Define the
\emph{unsaturated} controller~$\gus$ by
\begin{align*}
  \gus(\zeta_j, &\uv_{j-1}) \triangleq
  \\
  &
  \begin{cases}
    \uv_{j-1}, & \text{if } \vv_j = 0 , \\
    \left( \frac{ \vv_{j-1} }{ \vv_j } \left( 1 + \sigma_j \frac{
          \uv_{j-1} }{ -u_m } \right) -1 \right) \left( \frac{ -u_m }{
        \sigma_j } \right), & \text{if } \vv_j > 0 .
  \end{cases}  
\end{align*}
The rationale behind this definition is as follows. As mentioned
above, it is sufficient to design a controller that ensures safe
following when $\vv_j(t) \geq \vv_{j-1}(t)$. Thus, if $\vv_j = 0$ then
we need to consider only the case of $\vv_{j-1} = 0$. In this case,
the definition of $\gus$ ensures that the vehicle $j$ stays at rest as
long as vehicle $j-1$ is at rest and starts moving only when $j-1$
starts moving again. Further, since the relative velocity and
acceleration in this case would be zero, we see that $\sigma_j$ stays
constant. As we see more thoroughly in the sequel, if the vehicle is
moving, $\vv_j > 0$, then $\guc$ ensures that $\sigma_j$ stays
constant and thus ensuring safety. However, in the second case, $\gus$
might cause $\vv_j$ to exceed $v^M$. Further, we would like the
vehicle to continue using the optimal uncoupled controller if it does
not affect the safety by decreasing $\sigma_j$.  These considerations
motivate our definition of the \emph{safe-following} controller as
\begin{multline}\label{eq:sf-control}
  \gsf(t, \zeta_j, \uv_{j-1}) \triangleq
  \\
  \min \lbrace \guc(\tau_j, t, \xv_j, \vv_j), \gus(\zeta_j, \uv_{j-1})
  \rbrace .
\end{multline}



\subsubsection{\localvehcontrol controller}

Here, we design the local vehicle controller by specifying a rule to
switch between the uncoupled controller $\guc$ and the safe-following
controller~$\gsf$. To make precise whether two vehicles are
sufficiently far from each other, we introduce the \emph{coupling set}
$\Cc_s$ defined by
\begin{equation}\label{eq:coupling-set}
  \Cc_s \triangleq \{ ( \mathrm{v}_1, \mathrm{v}_2, \sigma ) :
  \mathrm{v}_2 \geq \mathrm{v}_1 \text{ and } \sigma \in [ 1, \sigma_0
  ] \} ,
\end{equation}
with $\sigma_0 > 1$ a design parameter. Intuitively, if $\zeta_j \in
\Cc_s$, then vehicle $j$ is going at least as fast as the vehicle in
front of it, and their safety ratio is close to $1$. With this in
mind, we define the \localvehcontrol controller for vehicle $j$,
\begin{align}\label{eq:vehicle-control}
  \uv_j(t) =
  \begin{cases}
    \guc, &\text{if } \zeta_j \notin \Cc_s, \ \vv_j < v^M ,
    \\
    [ \guc ]_{u_m}^0, &\text{if } \zeta_j \notin \Cc_s, \ \vv_j = v^M ,
    \\
    \gsf, &\text{if } \zeta_j \in \Cc_s, \ \vv_j < v^M ,
    \\
    [ \gsf ]_{u_m}^0, &\text{if } \zeta_j \in \Cc_s, \ \vv_j = v^M .
  \end{cases}
\end{align}
Note that $[ \guc ]_{u_m}^0 \neq \guc$ only if $\nexists \Fc_j$. This
controller has the vehicle use the safe-following controller when in
the coupling set, and the uncoupled controller otherwise.

\subsection{Upper bound on guaranteed occupancy
  time}\label{sec:bartauocc}

The last element of the design is the upper bound on the guaranteed
occupancy time for a bubble. To obtain this, we first upper bound the
inter-approach times of vehicles in a given bubble at the beginning of
the intersection. 

\begin{proposition}\longthmtitle{Upper bound on the inter-approach
    times of vehicles in a bubble at the
    intersection~\cite{PT-JC:17-tcns}} \label{prop:int-app-times}
  For any bubble $i$ and any vehicle $j \in \{ 2, \ldots, m_i \}$, if
  $\vv_{i,j-1}(\Ta_{i,j-1}) \geq \nunom$, then $\vv_{i,j}(\Ta_{i,j})
  \geq \nunom$ and $\Ta_{i,j} - \Ta_{i,j-1}$ is upper bounded by
  \begin{equation*}
    \Tiat \triangleq
    \begin{cases}
      \sigma_0 \Tnom, &\text{if } \ulinevv \geq \nunom ,
      \\
      \max\{ \sigma_0 \Tnom, \Tfol(\ulinevv) \}, &\text{if } \ulinevv
      < \nunom ,
    \end{cases}
  \end{equation*}
  where $\ulinevv \triangleq \frac{ -u_m v^M }{ -u_m + \sigma_0 u_M }$
  and
  \begin{align*}
    \Tfol(v) & \triangleq \frac{ (\nunom)^2 - v^2 }{ 2 u_M v^M } +
    \frac{ \sigma_0 \Dc( v, v^M ) }{ v^M } + \frac{ \nunom - v }{ u_M
    } .
  \end{align*}
\end{proposition}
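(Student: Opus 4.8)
The plan is to track the follower $(i,j)$ over the interval $[\Ta_{i,j-1}, \Ta_{i,j}]$, from the instant the leader $(i,j-1)$ reaches the start of the intersection until the follower does, and to exploit the structure of the \localvehcontrol controller~\eqref{eq:vehicle-control} together with the property that the unsaturated controller $\gus$ keeps the safety ratio $\sigP{j}$ constant. The geometric fact that drives the argument is that whenever the pair is coupled, $\zeta_j \in \Cc_s$, the coupling set caps $\sigP{j} \le \sigma_0$, so the inter-vehicle distance is at most $\sigma_0\, \Dc(\vv_{i,j-1}, \vv_{i,j})$; when the pair is uncoupled the follower runs the uncoupled controller $\guc$ and is governed instead by its scheduled time $\tau_{i,j} = \tau_{i,j-1} + \Tnom$. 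Both regimes must be carried through the argument.

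First I would establish the velocity claim $\vv_{i,j}(\Ta_{i,j}) \ge \nunom$. The follower applies $\gsf = \min\{\guc, \gus\}$, whose $\guc$ component is, by construction, feasible only for approach velocities in $[\nunom, v^M]$, while $\gus$ intervenes solely to preserve safety. Once the leader has cleared the intersection at a speed $\vv_{i,j-1}(\Ta_{i,j-1}) \ge \nunom$, the safe-following distance $\Dc(\vv_{i,j-1}, \vv_{i,j})$ evaluated against a fast leader no longer forces deceleration, so the follower is free to accelerate toward position $0$. The crux is to show that the slowest speed the follower can be driven to under $\gsf$ is $\ulinevv$: equivalently, $\ulinevv$ is the speed at which, at the coupling-set boundary $\sigP{j} = \sigma_0$, the braking demanded to hold the safety ratio balances the $u_M$-limited re-acceleration, since one checks directly that $\frac{\sigma_0 \ulinevv}{-u_m} = \frac{v^M - \ulinevv}{u_M}$. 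Hence, if $\ulinevv \ge \nunom$ the follower never drops below $\nunom$, and if $\ulinevv < \nunom$ one uses the exit-zone length guaranteed by Lemma~\ref{lem:zone-length-assump} to argue that the remaining distance suffices to re-accelerate from $\ulinevv$ back to at least $\nunom$ before reaching position~$0$.

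Next I would bound the inter-approach time by exhibiting the worst-case follower velocity profile on $[\Ta_{i,j-1}, \Ta_{i,j}]$. When $\ulinevv \ge \nunom$, the follower's speed stays above $\nunom$ and the gap it must close is at most $\sigma_0 \Dcnom$ (using $\Dc(\vv_{i,j-1}, \vv_{i,j}) \le \Dc(\nunom, v^M) = \Dcnom$ for a fast leader and a follower below $v^M$), so traversing it at no less than $\nunom$ yields $\Ta_{i,j} - \Ta_{i,j-1} \le \sigma_0 \Dcnom/\nunom = \sigma_0 \Tnom$. When $\ulinevv < \nunom$, the extremal profile is a decelerate-then-reaccelerate maneuver in which the follower is slowed to $\ulinevv$ at the maximal gap and then accelerates at $u_M$ while crossing; the three summands of $\Tfol(\ulinevv)$ account, respectively, for the distance covered while regaining speed (the term $\frac{(\nunom)^2 - v^2}{2u_M v^M}$), for traversing the maximal safe gap $\sigma_0\Dc(v, v^M)$ at the top speed $v^M$ (the term $\frac{\sigma_0\Dc(v,v^M)}{v^M}$), and for the re-acceleration time from $v$ to $\nunom$ (the term $\frac{\nunom - v}{u_M}$). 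Comparing this profile against the constant-speed one and keeping the larger gives the stated $\max\{\sigma_0\Tnom, \Tfol(\ulinevv)\}$.

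The main obstacle I anticipate is the worst-case trajectory characterization: rigorously arguing that $\ulinevv$ is the true infimum of the follower's speed and that the decelerate-then-reaccelerate profile is genuinely extremal requires careful bookkeeping of the switches between $\guc$ and $\gsf$ and of the velocity saturation at $v^M$ in~\eqref{eq:vehicle-control}, since each switch changes which constraint is active. A secondary difficulty is the transition at $\Ta_{i,j-1}$ itself, where the preceding vehicle leaves the branch and the follower's governing constraint changes: one must verify that safety---hence the distance bound via $\sigP{j} \le \sigma_0$---is preserved across this instant so that the subsequent time estimates remain valid.
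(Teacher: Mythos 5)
You should first be aware that the paper contains no proof of this proposition: it is imported from the companion work \cite{PT-JC:17-tcns} (where it is established through a sequence of lemmas), and the present paper only reuses isolated facts from that development, e.g.\ the inequality $\Ta_{i,j} \geq \tau_{i,j}$ invoked in the proof of Theorem~\ref{thm:sf-control}. So the comparison is necessarily against that cited argument, and at the skeleton level your proposal matches it: you correctly split into the coupled regime, where $\sigma_j \leq \sigma_0$ caps the gap by $\sigma_0 \Dc(\vv_{i,j-1},\vv_{i,j}) \leq \sigma_0 \Dcnom$ and yields the $\sigma_0 \Tnom$ bound, and the uncoupled regime governed by $\guc$ and the schedule; your algebraic identity $\frac{\sigma_0 \ulinevv}{-u_m} = \frac{v^M - \ulinevv}{u_M}$ is also correct.

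There are, however, concrete gaps. First, in the uncoupled regime the bound $\Ta_{i,j} - \Ta_{i,j-1} \leq \tau_{i,j} - \tau_{i,j-1} = \Tnom$ requires the one-sidedness fact $\Ta_{i,j-1} \geq \tau_{i,j-1}$ (a vehicle never approaches earlier than its assigned time); you never state it, and it is itself something the cited proof must establish. Second, your handling of the terminal-velocity claim when $\ulinevv < \nunom$ via Lemma~\ref{lem:zone-length-assump} would fail: that lemma guarantees schedule feasibility for vehicles still in the staging or mid zones at scheduling time, and says nothing about a follower slowed below $\nunom$ \emph{inside} the exit zone, where the distance to go can be as small as the gap $L$ and is then insufficient to re-accelerate (with the paper's parameters, $(\nunom)^2/2u_M \approx 30$m $> L = 4$m). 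The guarantee instead comes from the controller structure: a feasible $\guc$ enforces the terminal constraint $\vv_j(\tau_j) \in [\nunom, v^M]$ by construction, while in the infeasible case $\gsf = \min\{u_M, \gus\}$ with $\gus < 0$ only when $\vv_j > \vv_{j-1}$, so after $\Ta_{i,j-1}$ the follower cannot be driven below the leader's post-crossing speed, which is at least $\nunom$. Third, your reading of the extremal configuration is muddled: in $\Tfol(v)$ the gap term is $\sigma_0 \Dc(v, v^M)$, and by the convention of Definition~\ref{dfn:sfd} the first argument is the \emph{leader's} speed, so the maximal gap corresponds to a slow leader and a follower at $v^M$, not to "the follower slowed to $\ulinevv$ at the maximal gap" as you write (a follower at $\ulinevv$ behind a faster leader gives only $\Dc = L$). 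Finally, the two claims you defer as anticipated obstacles, namely that $\ulinevv$ genuinely lower-bounds the follower's speed across all switching sequences of~\eqref{eq:vehicle-control} and that the decelerate-then-reaccelerate profile realizing $\Tfol(\ulinevv)$ is extremal, are not side-issues: they are the entire technical content of the proposition, and without them the case split on $\ulinevv$ versus $\nunom$ and the three-term decomposition of $\Tfol$ remain unverified. As written, your proposal is a plausible roadmap rather than a proof.
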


As a consequence of Proposition~\ref{prop:int-app-times}, we can
guarantee an upper bound on the intersection occupancy time of a
bubble.

\begin{corollary}\longthmtitle{Guaranteed upper bound on occupancy
    time of a bubble~\cite{PT-JC:17-tcns}}\label{cor:occ-time}
  For any bubble~$i$, its occupancy time~$\tauocc_i$ is upper bounded
  as $\tauocc_i \le \bartauocc_i$, where
  \begin{equation}\label{eq:bartauocc}
    \bartauocc_i = (m_i - 1) \Tiat + \max \left\{
      \frac{ L + \Delta }{ \nunom } , \Tiat \right\} .
  \end{equation}
\end{corollary}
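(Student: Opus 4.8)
The plan is to split the occupancy time into two pieces --- the spread of the vehicles' approach times across the bubble and the time the last vehicle needs to clear the intersection --- and to bound each piece separately. By definition, the intersection is occupied by bubble $i$ from the instant the front of the lead vehicle $(i,1)$ reaches its beginning, i.e.\ at $\Ta_{i,1}$, until the rear of the last vehicle $(i,m_i)$ leaves it, i.e.\ at $\Tx_{i,m_i}$, where $\xv_{i,m_i}(\Tx_{i,m_i}) = \Delta + L$. Thus $\tauocc_i = \Tx_{i,m_i} - \Ta_{i,1}$, and I would rewrite this as the telescoping decomposition
\begin{equation*}
  \tauocc_i = \sum_{j=2}^{m_i} \big( \Ta_{i,j} - \Ta_{i,j-1} \big) + \big( \Tx_{i,m_i} - \Ta_{i,m_i} \big) .
\end{equation*}

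For the first sum, the idea is to invoke Proposition~\ref{prop:int-app-times} iteratively along the bubble. First I would establish the base case that the lead vehicle enters at a speed of at least $\nunom$, i.e.\ $\vv_{i,1}(\Ta_{i,1}) \ge \nunom$; this is exactly what the controller design guarantees (cf.\ the constraint $\vv_j(\tau_{i,k}) \in [\nunom, v^M]$ in the definition of $\guc$ together with the feasibility ensured by Lemma~\ref{lem:zone-length-assump}). With the base case in hand, Proposition~\ref{prop:int-app-times} applies at $j=2$, yielding both $\vv_{i,2}(\Ta_{i,2}) \ge \nunom$ and $\Ta_{i,2} - \Ta_{i,1} \le \Tiat$; the first conclusion is precisely the hypothesis needed at the next index, so by induction on $j$ every gap satisfies $\Ta_{i,j} - \Ta_{i,j-1} \le \Tiat$ and, in particular, $\vv_{i,m_i}(\Ta_{i,m_i}) \ge \nunom$. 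Summing the $m_i - 1$ gaps gives $\Ta_{i,m_i} - \Ta_{i,1} \le (m_i-1)\Tiat$.

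For the second term I would bound the traversal time of the last vehicle. Since the vehicles of a bubble cross together and the intersection is used by a single branch at a time, once $(i,m_i)$ enters it faces no slower vehicle ahead inside the intersection, so its speed never drops below its entry speed, which is at least $\nunom$ by the induction above. Covering the distance $\Delta + L$ at speed at least $\nunom$ therefore takes at most $(L+\Delta)/\nunom$, i.e.\ $\Tx_{i,m_i} - \Ta_{i,m_i} \le (L+\Delta)/\nunom$. Combining the two bounds yields
\begin{equation*}
  \tauocc_i \le (m_i-1)\Tiat + \frac{L+\Delta}{\nunom} \le (m_i-1)\Tiat + \max\Big\{ \tfrac{L+\Delta}{\nunom}, \Tiat \Big\} = \bartauocc_i ,
\end{equation*}
where the last inequality merely relaxes the traversal bound and hence is immediate.

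The main obstacle I anticipate is not the telescoping arithmetic but the two minimum-speed claims: establishing the base case $\vv_{i,1}(\Ta_{i,1}) \ge \nunom$ and, more delicately, arguing that the last vehicle maintains speed at least $\nunom$ throughout the traversal rather than only at the entry instant. Both rest on the guarantees of the local vehicular controller from~\cite{PT-JC:17-tcns}: the entry-speed claim for all vehicles is in fact already folded into the conclusion of Proposition~\ref{prop:int-app-times}, while the in-intersection claim uses that nothing forces $\guc$ to decelerate the vehicle below $\nunom$ there. The appearance of the $\max$ in $\bartauocc_i$ is purely conservative --- it makes the bound hold uniformly (e.g.\ in the single-vehicle case $m_i = 1$, where there are no inter-approach gaps) and never tightens the estimate I actually prove.
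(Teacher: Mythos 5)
Your proposal is correct and follows essentially the same route as the paper, which states the corollary as imported from~\cite{PT-JC:17-tcns} and whose own reasoning (in the proof of Theorem~\ref{thm:sf-control}(iv)) is exactly your argument: apply Proposition~\ref{prop:int-app-times} inductively to bound the $m_i-1$ inter-approach gaps by $\Tiat$ each, then bound the last vehicle's traversal of the distance $L+\Delta$ at speed at least $\nunom$. One minor point: the $\max\{\cdot,\Tiat\}$ is not purely gratuitous slack in the design --- as the paper explains right after the corollary, the $\Tiat$ term guarantees a safe-following gap to the first vehicle of the next bubble on the same branch (and is used as such in the theorem's proof) --- although for the inequality $\tauocc_i \le \bartauocc_i$ as stated, your reading of it as a relaxation is valid.
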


The reasoning for the inclusion of $\Tiat$ in the second term
of~\eqref{eq:bartauocc} is as follows. If the bubble $i'$ that uses
the intersection immediately after bubble $i$ is from the same branch
as $i$, then we would like to have a safe-following distance between
the last vehicle $(i,m_i)$ of bubble $i$ and the first vehicle
$(i',1)$ of bubble $i'$ at the time the vehicle $(i',1)$ approaches
the intersection at its assigned time, $\Ta_{i',1} = \tau_{i',1} =
\tau_{i'}$.

\section{Provably safe optimized traffic
  coordination}\label{sec:all-together}

This section brings together the discussion above on the individual
aspects (dynamic vehicle clustering into bubbles, optimized planning
and scheduling of the bubbles, and local distributed control for
safety and execution of plans) of our hierarchical-distributed
coordination approach to intersection traffic.  The following result
shows that the design ensures vehicle safety and satisfies the
prescribed schedule.

\begin{theorem}\longthmtitle{Provably safe optimized traffic
    coordination}\label{thm:sf-control}
  Consider a traffic intersection with four incoming branches
  operating under Assumptions (i)-(v) in
  Section~\ref{sec:problem-statement}, where the vehicle dynamics are
  given by~\eqref{eq:vehicle-dyn} under the \localvehcontrol
  controller~\eqref{eq:vehicle-control}.
  Assume the exit zone length satisfies $L_e \geq -{ (v^M)^2 }/{ 2u_m
  } + { (\nunom)^2 }/{ 2u_M }$ and that, at initial time $t_0 = 0$,
  vehicles on each branch $k \in \until{4}$ are within the staging
  zone.  Furthermore, suppose that at each $t_s = s \timestepCS$ for
  each $s \in \integersnonnegative$, the vehicles in the staging zone
  that are clustered by the \clusteralgo algorithm are in a safe
  configuration ($\sigma_j(t_s) \geq 1$ for each new vehicle $j$).
  Then,
  \begin{enumerate}
  \item each vehicle belongs to some cluster, each bubble is scheduled
    by the \schedalgo algorithm at least once. Moreover, at each
    $t_s$, this strategy optimizes the schedule of the bubbles $\Lc$
    given by the \clusteralgo algorithm by minimizing the simplified
    cost function $\Cc_\Lc$,
  \item the schedule assigned to the bubbles respects the
    non-collision constraints~\eqref{eq:tau-constraints}, with the
    occupancy time of each bubble $i$ upper bounded by $\bartauocc_i$
    as given in~\eqref{eq:bartauocc},
  \item inter-vehicle safety is ensured ($\sigma_j \geq 1$) for all
    vehicles and for all time subsequent to $t_0$, and
  \item the first vehicle $(i,1)$ of each bubble $i$ approaches the
    intersection at $\tau_i$, the bubble uses the intersection only
    within its allotted time interval $[\tau_i, \tau_i +
    \bartauocc_i]$, and each vehicle travels with a velocity of at
    least $\nunom$ after approaching the intersection.
  \end{enumerate}
\end{theorem}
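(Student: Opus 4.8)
The plan is to separate the combinatorial guarantees of claim (i) from the dynamical guarantees of claims (iii), (iv), (ii), proving the latter three by a single induction over the scheduling instances $t_s = s\timestepCS$. For claim (i), I would first note that the condition $\timestepCS < L_s/v^M$ forces every newly spawned vehicle to remain in the staging zone for at least one clustering instance, so the \clusteralgo algorithm assigns it to a bubble before it enters the mid zone. Tracing Algorithm~\ref{algo:dyn-CS}, the popping mechanism in steps 8--10 only ever drops bubbles that have already been scheduled and have entered their exit zone, so every bubble is fed to the IM, and hence scheduled by the \schedalgo algorithm, at least once. Optimality of the resulting schedule then follows by combining the lemma associated with Algorithm~\ref{algo:optim-given-P} (which returns the optimal average velocities for any fixed intra-branch-respecting order) with correctness of the branch-and-bound search: since each $\phi_i$ in~\eqref{eq:simp-cost-fun} is decreasing in $\avgvel_i$ and the \velboundalgo algorithm produces per-bubble upper bounds $\hilim{\Pc}_i \geq \avgvel_i$ valid for every completion of the partial order $\Pc$, the quantity $C^{\Pc}$ in~\eqref{eq:bnd-optim-cost-subtree} is a genuine lower bound on the optimal cost of the subtree, which is exactly what the pruning step requires.

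For claim (iii) I would run an induction whose invariant is that every following pair satisfies $\sigma_j(t) \geq 1$ up to the current scheduling instance, with the base case supplied by the hypothesis that the initial and each freshly clustered configuration is safe. In the inductive step I would analyze the switched controller~\eqref{eq:vehicle-control} only on the region $\vv_j \geq \vv_{j-1}$, since~\eqref{eq:sf-dist} makes $\sigma_j$ automatically non-decreasing when $\vv_j < \vv_{j-1}$. The key computation is that while $\zeta_j$ lies in the coupling set $\Cc_s$ the safe-following controller $\gsf = \min\{\guc, \gus\}$ is active: when $\gsf = \gus$ the unsaturated law holds $\sigma_j$ constant, and when $\gsf = \guc \leq \gus$ the gentler acceleration makes $\sigma_j$ non-decreasing, so in either case $\sigma_j$ cannot fall below its value on entry to $\Cc_s$, which is at least $1$. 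I would then check that the saturated variants $[\guc]_{u_m}^0$ and $[\gsf]_{u_m}^0$ used at $\vv_j = v^M$ preserve this monotonicity, invoking the controller analysis of~\cite{PT-JC:17-tcns} to close the argument across the switching boundary $\sigma_j = \sigma_0$.

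Claims (iv) and (ii) I would treat together, leaning on the cited guarantees. Feasibility of the uncoupled controller at the scheduling instant, and hence that vehicle $(i,1)$ can reach the intersection exactly at $\tau_{i,1}=\tau_i$, follows because the scheduler only returns $\tau_i \in [\taue_i,\taul_i]$, so by~\eqref{eq:tau-ik} each $\tau_{i,j}\in[\taue_{i,j},\taul_{i,j}]$, while Lemma~\ref{lem:zone-length-assump} together with the $L_e$ hypothesis guarantees every vehicle can still enter the intersection at speed at least $\nunom$. With the safety invariant of claim (iii) in force, Proposition~\ref{prop:int-app-times} applies inductively along each bubble: $\vv_{i,1}(\Ta_{i,1})\geq\nunom$ propagates to $\vv_{i,j}(\Ta_{i,j})\geq\nunom$ for all $j$ and bounds each inter-approach gap by $\Tiat$. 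Summing these gaps and adding the traversal time yields the occupancy bound $\bartauocc_i$ of Corollary~\ref{cor:occ-time}, which is precisely the occupancy guarantee of claim (ii); the order-preservation part of claim (ii) is immediate since the scheduler enforces~\eqref{eq:tau-constraints} by construction. These same bounds confirm that bubble $i$ occupies the intersection only within $[\tau_i,\tau_i+\bartauocc_i]$, completing claim (iv).

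The hard part will be the inductive step for safety across re-scheduling. Because a bubble may be scheduled more than once, at an instance $t_{s_i}$ the target $\tau_i$, and thus the $\tau_{i,j}$ from~\eqref{eq:tau-ik}, can change while the vehicles are mid-trajectory; I will need to verify that the $\taumin$ constraint in step~12 of Algorithm~\ref{algo:dyn-CS} keeps the new schedule consistent with the progress already made, so that a feasible $\guc$ continues to exist and the invariant $\sigma_j \geq 1$ is not broken at the switching instant. The delicate point is that maintaining $\sigma_j \geq 1$, preserving the speed floor $\vv_{i,j}\geq\nunom$ required by Proposition~\ref{prop:int-app-times}, and keeping the uncoupled controller feasible are coupled conditions, so the induction must carry all three simultaneously rather than establishing them one after another.
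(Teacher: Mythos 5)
Your overall decomposition matches the paper's: claim (i) from the \clusteralgo and \schedalgo constructions (you actually supply more detail on branch-and-bound correctness than the paper, which simply asserts it), claim (iii) via the invariance of $\sigma_j$ under the switched controller~\eqref{eq:vehicle-control} as in \cite[Lemma~IV.1]{PT-JC:17-tcns}, and claims (ii), (iv) via Lemma~\ref{lem:zone-length-assump} and an inductive application of Proposition~\ref{prop:int-app-times} along each bubble. However, there is a genuine gap at the start of your treatment of claim (iv): the inference ``feasibility of the uncoupled controller at the scheduling instant, \emph{hence} vehicle $(i,1)$ reaches the intersection exactly at $\tau_{i,1}=\tau_i$'' is invalid whenever a bubble $q$ precedes bubble $i$ on the same branch. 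As the paper itself warns when defining $\guc$, feasibility ($\exists \Fc_{i,1}$) can be lost at a \emph{later} time precisely because $(i,1)$ is slowed by $(q,m_q)$ through the safe-following mode $\gsf$; if that happens, $\Ta_{i,1} > \tau_i$, the exact-arrival statement fails, the base case $\vv_{i,1}(\Ta_{i,1}) \geq \nunom$ for your application of Proposition~\ref{prop:int-app-times} is no longer furnished by the $\guc$ constraints, and the occupancy-window guarantee of claim (ii) collapses with it. Ruling this out is the crux of the paper's proof and is missing from yours.

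The paper closes this hole with an induction over the bubbles on a branch: assuming (iv) for the predecessor $q$, one has $\Ta_{q,m_q} \leq \tau_q + (m_q-1)\Tiat$; an argument analogous to Proposition~\ref{prop:int-app-times} shows that if $\nexists \Fc_{i,1}$ at some $t \in [t_s, \Ta_{i,1}]$ then $\Ta_{i,1} \leq \Ta_{q,m_q} + \Tiat$; and the schedule constraint together with~\eqref{eq:bartauocc} gives $\tau_i \geq \tau_q + \bartauocc_q \geq \tau_q + (m_q-1)\Tiat + \Tiat \geq \Ta_{q,m_q} + \Tiat$. Since $\Ta_{i,1} \geq \tau_i$ always holds, these inequalities force $\exists \Fc_{i,1}$ throughout $[t_s,\tau_i]$, so $(i,1)$ arrives exactly at $\tau_i$ with speed at least $\nunom$ --- this is exactly why the $\max\{(L+\Delta)/\nunom,\Tiat\}$ term is built into $\bartauocc_i$, a fact your summation-of-gaps derivation of the occupancy bound never uses. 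Note also that the ``hard part'' you flag (consistency under re-scheduling via the $\taumin$ constraint) is a different and milder issue --- step 12 of Algorithm~\ref{algo:dyn-CS} ensures by construction that newly scheduled approach times exceed the guaranteed crossing times of bubbles dropped from $\Lc$ --- and in any case you leave it unresolved, so the proposal as written does not complete the proof.
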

\begin{proof}
  (i) This claim follows from the \clusteralgo and the \schedalgo
  algorithms. Claim (ii) is ensured by the inclusion of the
  non-collision constraints~\eqref{eq:tau-constraints} in the
  \schedalgo algorithm and the feasibility of the scheduling problem
  guaranteed by Lemma~\ref{lem:zone-length-assump}.

  (iii) Inter-vehicular safety is a consequence
  of~\cite[Lemma~IV.1]{PT-JC:17-tcns}
  - for $\sigma_j \in [1, \sigma_0]$, if
  $\zeta_j \in \Cc_s$, then $\sigma_j$ either stays constant or
  increases; if on the other hand $\zeta_j \notin \Cc_s$, then it means
  $\vv_j < \vv_{j-1}$ and $\xv_{j-1} - \xv_j$ increases while
  $\Dc(\vv_{j-1},\vv_j)$ stays constant at $L$ and thus $\sigma_j$
  increases. Thus $\sigma_j(t) \geq 1$ is guaranteed for all vehicles
  $j$ and for all $t \geq t_s$.

  (iv) If no bubble precedes bubble $i$ on its branch, then the
  vehicle $(i,1)$ approaches the intersection at its designated time
  $\tau_{i,1} = \tau_i$, with at least a velocity of $\nunom$. Then,
  by applying Proposition~\ref{prop:int-app-times} inductively, we see
  that the last vehicle $(i,m_i)$ of bubble $i$ approaches the
  intersection with a velocity of at least $\nunom$ and $\Ta_{i,m_i}
  \leq \Ta_{i,1} + (m_i - 1) \Tiat$ and it takes at most
  $(L+\Delta)/\nunom$ amount of time to go past the intersection. Thus
  from~\eqref{eq:bartauocc}, we see that claim (iv) is satisfied in
  this case.

  Now suppose bubble $q$ precedes bubble $i$ on its branch and suppose
  claim (iv) is true for bubble $q$. From our reasoning above,
  $\Ta_{q,m_q} \leq \Ta_{q,1} + (m_q - 1) \Tiat = \tau_q + (m_q - 1)
  \Tiat$. Now, using arguments analogous to those in
  Proposition~\ref{prop:int-app-times}, we see that if $\nexists
  \Fc_{i,1}$ at any time $t \in [t_s, \Ta_{i,1}]$, then we have
  $\Ta_{i,1} \leq \Ta_{q,m_q} + \Tiat$. However, note
  from~\eqref{eq:tau-constraints} and~\eqref{eq:bartauocc} that
  \begin{equation*}
    \tau_i \geq \tau_q + \bartauocc_q \geq \tau_q + (m_q - 1) \Tiat +
    \Tiat \geq \Ta_{q,m_q} + \Tiat ,
  \end{equation*}
  where in obtaining the second inequality we have
  used~\eqref{eq:bartauocc}. Recall from the first paragraph of the
  proof of Proposition~\ref{prop:int-app-times} that for any vehicle
  $(i,j)$, $\Ta_{i,j} \geq \tau_{i,j}$ and in particular $\Ta_{i,1}
  \geq \tau_i$. Thus, we conclude that $\exists \Fc_{i,1}$ for all
  time $t \in [t_s, \Ta_{i,1}] = [t_s, \tau_i]$ and that vehicle
  $(i,1)$ approaches the intersection at its assigned time $\tau_i$
  with a velocity of at least $\nunom$. Hence, by using induction over
  all vehicles in bubble $i$ and over all bubbles $i$ themselves we
  conclude that claim (ii) holds.
\end{proof}

Theorem~\ref{thm:sf-control} does not guarantee the optimal operation
of the system at the level of individual vehicles under the proposed
hierarchical-distributed coordination approach. However, this result
guarantees the optimality at the level of bubbles, on each time $t_s =
s \timestepCS$, for the bubbles scheduled at~$t_s$. We believe this is
a good compromise in balancing the trade-off between optimal vehicle
operation and complexity of planning and control.

\section{Simulations}

This section presents simulations of our proposed
hierarchical-distributed design and comparisons with a signal-based
traffic coordination approach under varying traffic conditions. 
Table~\ref{tab:sys-par} specifies the system parameters that we keep
fixed across all the simulations presented here.
\begin{table}[!htb]
  \caption{System parameters} \label{tab:sys-par}
  \centering
  \begin{tabular}{l l l}
    Parameter & Symbol & Value \\
    \hline
    \multicolumn{3}{c}{General parameters} \\
    \hline
    Car length & $L$ & $4$m \\
    Intersection length & $\Delta$ & $12$m \\
    Zone lengths & $L_s, L_m, L_e$ & $70$m \\
    Max. speed limit & $v^M$ & $60$km/h \\
    Max. accel. & $u_M$ & $3$m/s$^2$ \\
    Min. accel. & $u_m$ &  $-4$m/s$^2$ \\
    \hline
    \multicolumn{3}{c}{HD algorithm parameters} \\
    \hline
    Nominal speed of crossing & $\nunom$ & $48$km/h \\
    Parameter in~\eqref{eq:coupling-set} & $\sigma_0$ & 1.2 \\
    Nominal inter-vehicle approach time & $\Tnom$ & $\approx 1.23$s
    \\
    Upper-bound on inter-vehicle approach time & $\Tiat$ &
    $\approx1.58$s \\
    Time period for execution of Algorithm~\ref{algo:dyn-CS} &
    $\timestepCS$ & $3.77$s \\
    Max. \# new bubbles on branch $k$ & $\Ncbar_k$ & 2 \\
    Max. \# bubbles scheduled & $\maxNbubbles$ & 8 \\    
    \hline
    \multicolumn{3}{c}{Signal-based algorithm parameters} \\
    \hline
    Green-time & & 10s
  \end{tabular}
\end{table}
The parameters $\Tnom$ and $\Tiat$ are computed parameters, while the
remaining ones in the table are design choices.

\subsection{Dynamic traffic generation}

In order to simulate dynamically generated traffic, new vehicles are
spawned every $\timestepCS$ units of time anywhere in the staging zone
of each branch as a Poisson arrival process~\cite{AP-SUP:02}. In order
to ensure safe following at the moment vehicle $j$ is spawned, we
define the arrival process on $\sigma_j$ rather than on vehicle
distances. To be precise, let $t_s = s \timestepCS$ be the time at
which vehicle $j$ is spawned. Then, we let $\sigma_j(t_s) = \sigma$,
with $\sigma$ drawn from the distribution with probability density
function $1 + (1/\mu) \text{e}^{-(1/\mu) \sigma}$ so that
$\sigma_j(t_s) \geq 1$ and the distribution has a mean of $1 +
\mu$. Thus, the smaller the value of $\mu$ the greater is the density
of the generated traffic.

The velocity of vehicle $j$ at $t_s$, $\vv_j(t_s)$, takes a random
value uniformly chosen from the interval $[0, v^M]$. Then, the
position of vehicle $j$ at $t_s$, $\xv_j(t_s)$ is obtained
from~\eqref{eq:sigma} as
\begin{equation*}
  \min \{ -(L_e+L_m), \xv_{j-1}(t_s) - \sigma_j(t_s)
  \Dc(\vv_{j-1}(t_s), \vv_j(t_s)) \},
\end{equation*}
where $\xv_{j-1}(t_s)$ and $\vv_{j-1}(t_s)$ are the position and the
velocity of the last vehicle previously defined on the same branch as
that of vehicle $j$. If there is no previously defined vehicle on the
branch, then $\xv_{j-1}(t_s) = \infty$. Recall from
Section~\ref{sec:cluster-dyn} that if $\timestepCS < \frac{ L_s }{ v^M
}$, then vehicles entering the problem domain during the time interval
$[t_s - \timestepCS, t_s]$ are within the staging zone at~$t_s$. This
explains the imposition of the upper bound $-(L_e+L_m)$ on
$\xv_j(t_s)$. Finally, the number of vehicles spawned on a branch at
$t_s$ is determined as follows. The procedure to spawn new vehicles
described above is repeated as long as the spawned vehicle is in the
staging zone, i.e., $\xv_j(t_s) \in [-(L_e+L_m+L-s), -(L_e+L_m)]$.

\subsection{Signal-based traffic coordination}

The vehicle control policy in the simulations with signal-based
traffic coordination is given by~\eqref{eq:vehicle-control} with
$\gsf$ given by~\eqref{eq:sf-control} and with $\guc = u_M$. The
traffic signaling policy is as follows - at any given time only one of
the four branches has the right of way (green or yellow signal). The
other branches do not have a right of way (red light). When the signal
changes to green from red for a branch, it stays green for a period of
time we call \emph{green-time} and then it turns to yellow. When the
signal for a branch is yellow the vehicles on that branch, which are
yet to cross the intersection, are divided into two groups - those
that can come to a stop before the beginning of the intersection and
those that cannot. For the first group of vehicles, a virtual vehicle
is introduced at position $L$ and having a velocity of $0$ so that the
control policy~\eqref{eq:vehicle-control} with the added virtual
vehicle ensures that the second group of vehicles comes to a stop
before the intersection. The second group of vehicles continue with
the usual control policy. The signal changes from yellow to red when
the first group of vehicles all cross the intersection. The virtual
vehicle for the branch is retained as long as the branch has yellow or
red signals. The branches get the right of way in a round-robin
manner.

\subsection{Results and discussion}

In the first set of simulations we simulated the signal-based
algorithm and the proposed hierarchical-distributed (HD) algorithm for
different values of $\mu$ and with $W_T = 1$ in the cost
function~\eqref{eq:cost-fun}. For each $\mu$, 10 trials were
conducted, each for 1 minute of simulation time. The results are
summarized in Figure~\ref{fig:sim-time-compare}. Note that as
per~\eqref{eq:bartauocc} the inter-approach times of vehicles is lower
bounded by $\Tiat$ in our simulations, which sets a uniform upper
bound on the intersection throughput of around 38 cars per minute
(CPM). This is the limiting factor, which explains the nearly constant
throughput across $\mu$ and across different trials in
Figure~\ref{fig:HD-CPM}. As can be seen from Figure~\ref{fig:sig-CPM},
the throughput in the signal-based coordination is significantly
higher, at the expense of higher cost.
\begin{figure*}[!htpb]
  \centering \subfigure[\label{fig:sig-CPM}]
  {\includegraphics[width=0.24\textwidth]{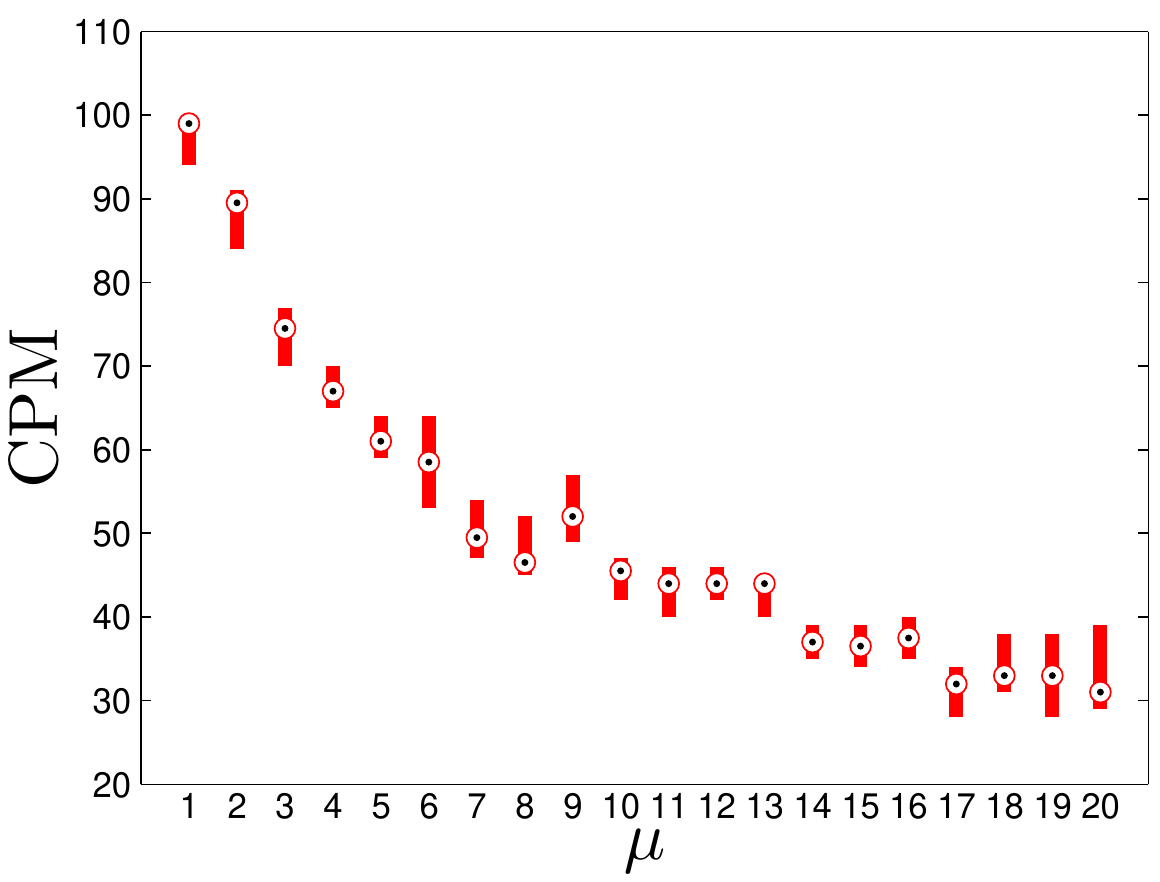}}
  \subfigure[\label{fig:HD-CPM}]
  {\includegraphics[width=0.24\textwidth]{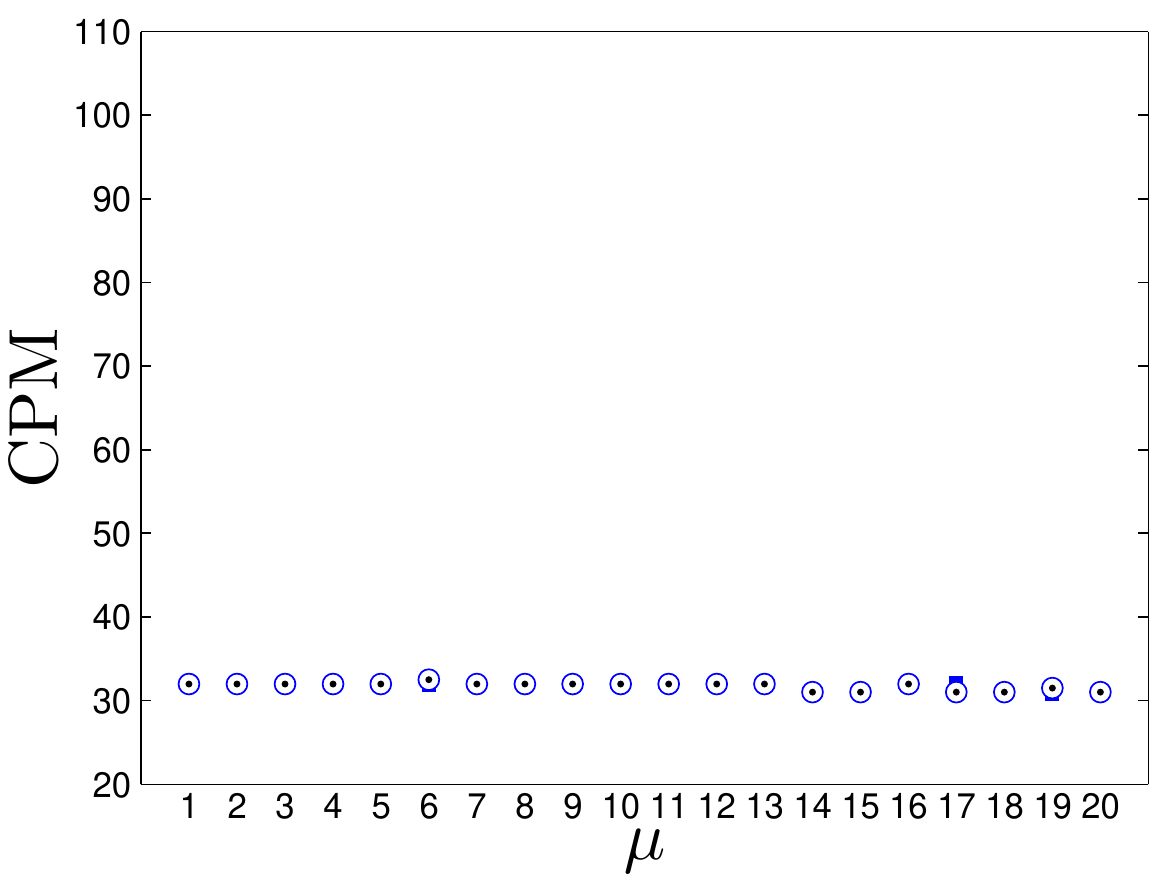}}
  \subfigure[\label{fig:sig-CPC-time}]
  {\includegraphics[width=0.24\textwidth]{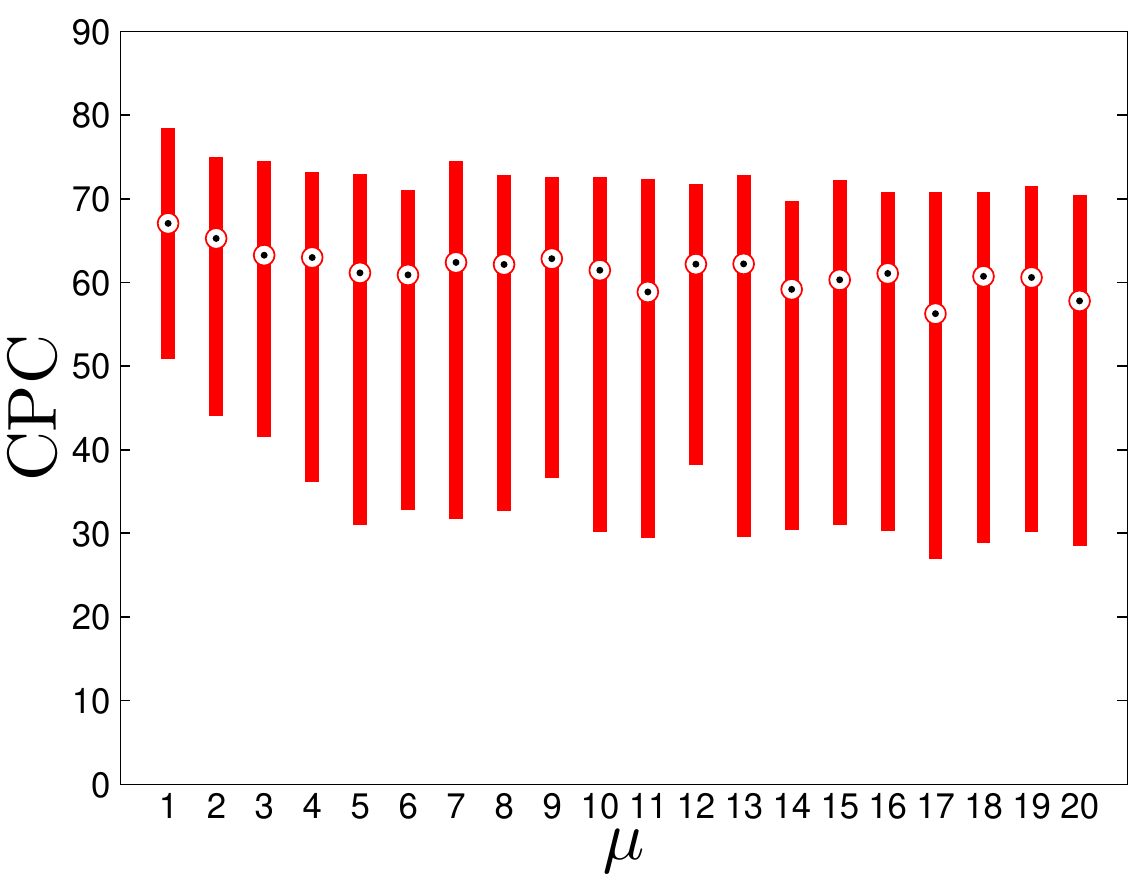}}
  \subfigure[\label{fig:HD-CPC-time}]
  {\includegraphics[width=0.24\textwidth]{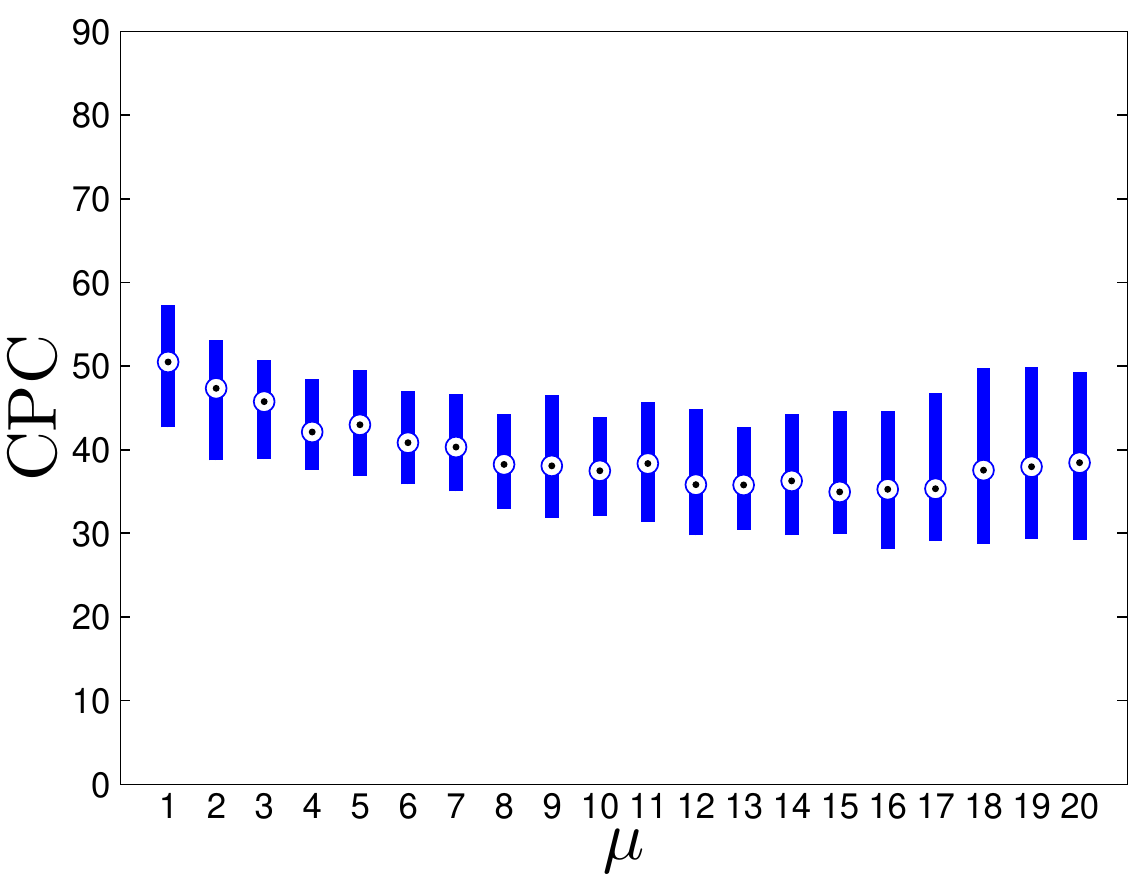}}
  \vspace*{-1ex}
  \caption{Cars per minute (CPM) is the number of cars that cross the
    intersection in a simulation time of $1$ minute. The cost per car
    (CPC) is computed with $W_T = 1$ in~\eqref{eq:cost-fun}. (a) and
    (c) are for the signal-based control while (b) and (d) plots are
    for the HD algorithm.} \label{fig:sim-time-compare}
\end{figure*}
For this reason, to have a fairer comparison, we have performed
simulations with the simulation time in each trial determined by the
time it takes 50 cars (TCC) to cross the intersection, summarized in
Figure~\ref{fig:sim-carcap-compare}. In a loose sense,
Figures~\ref{fig:sig-CPM} and~\ref{fig:sig-TCC} on the one hand and
Figures~\ref{fig:HD-CPM} and~\ref{fig:HD-TCC} on other hand are
inverted. Note that both in Figures~\ref{fig:sim-time-compare}
and~\ref{fig:sim-carcap-compare} the HD algorithm performs better in
terms of cost. Also note the lesser dispersion in the case of the HD
algorithm, which points to a more socially equitable distribution of
the cost. Also note that nearly steady throughput is a valuable
feature for traffic management for a network of intersections, such as
a city grid.
\begin{figure*}[!htpb]
  \centering \subfigure[\label{fig:sig-TCC}]
  {\includegraphics[width=0.24\textwidth]{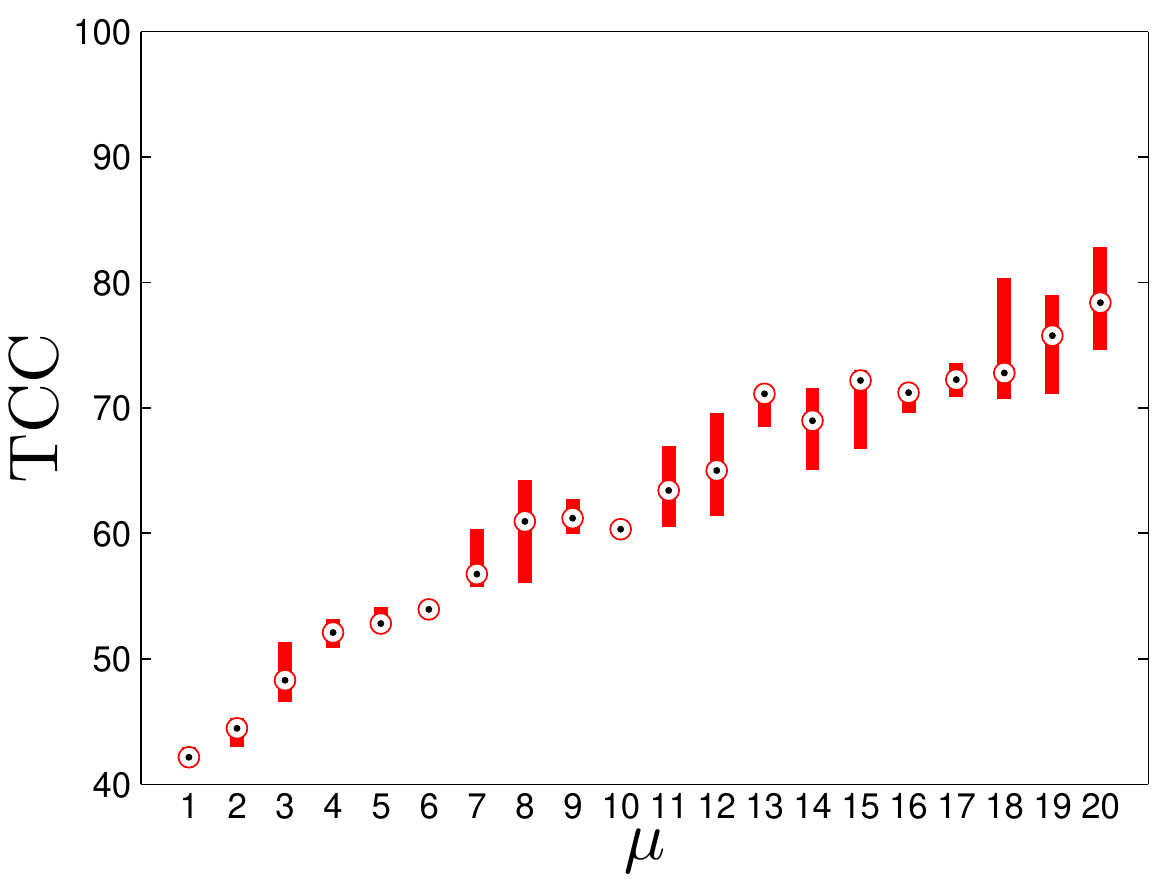}}
  \subfigure[\label{fig:HD-TCC}]
  {\includegraphics[width=0.24\textwidth]{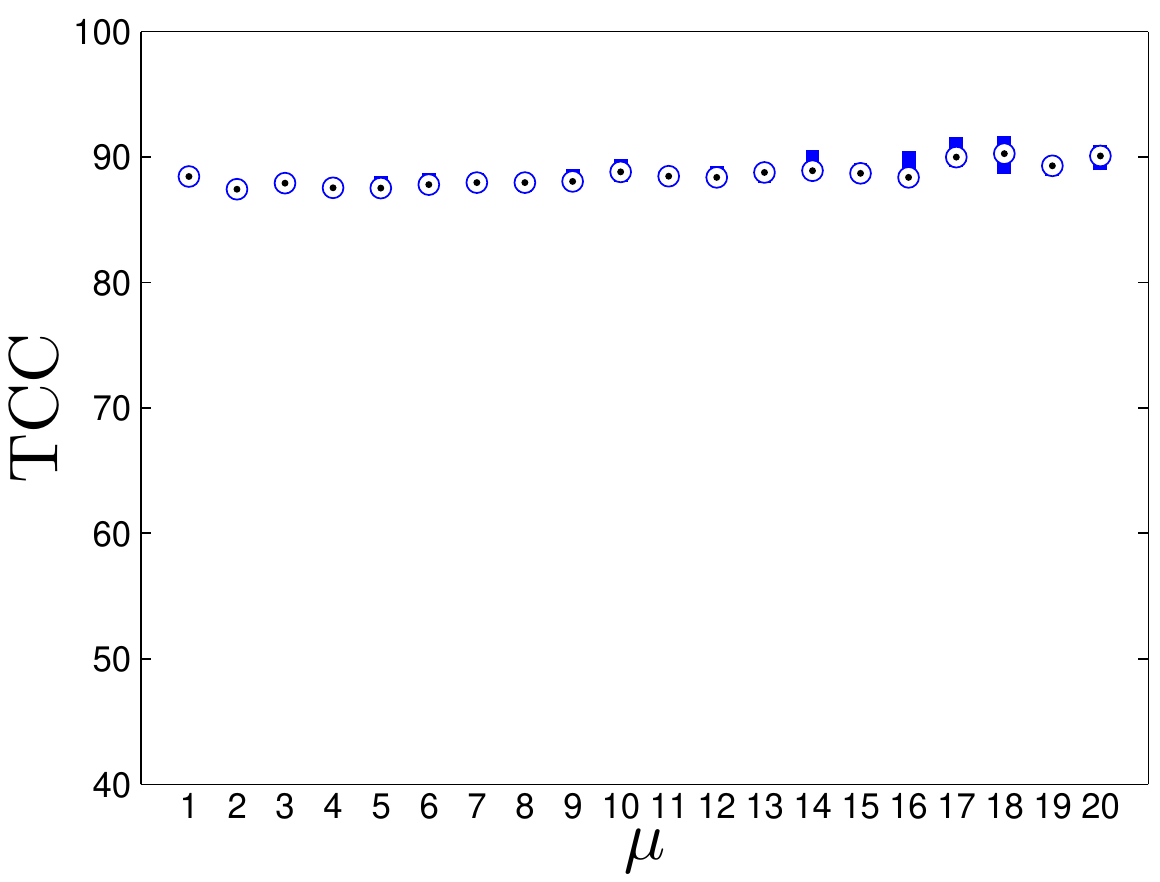}}
  \subfigure[\label{fig:sig-CPC-cap}]
  {\includegraphics[width=0.24\textwidth]{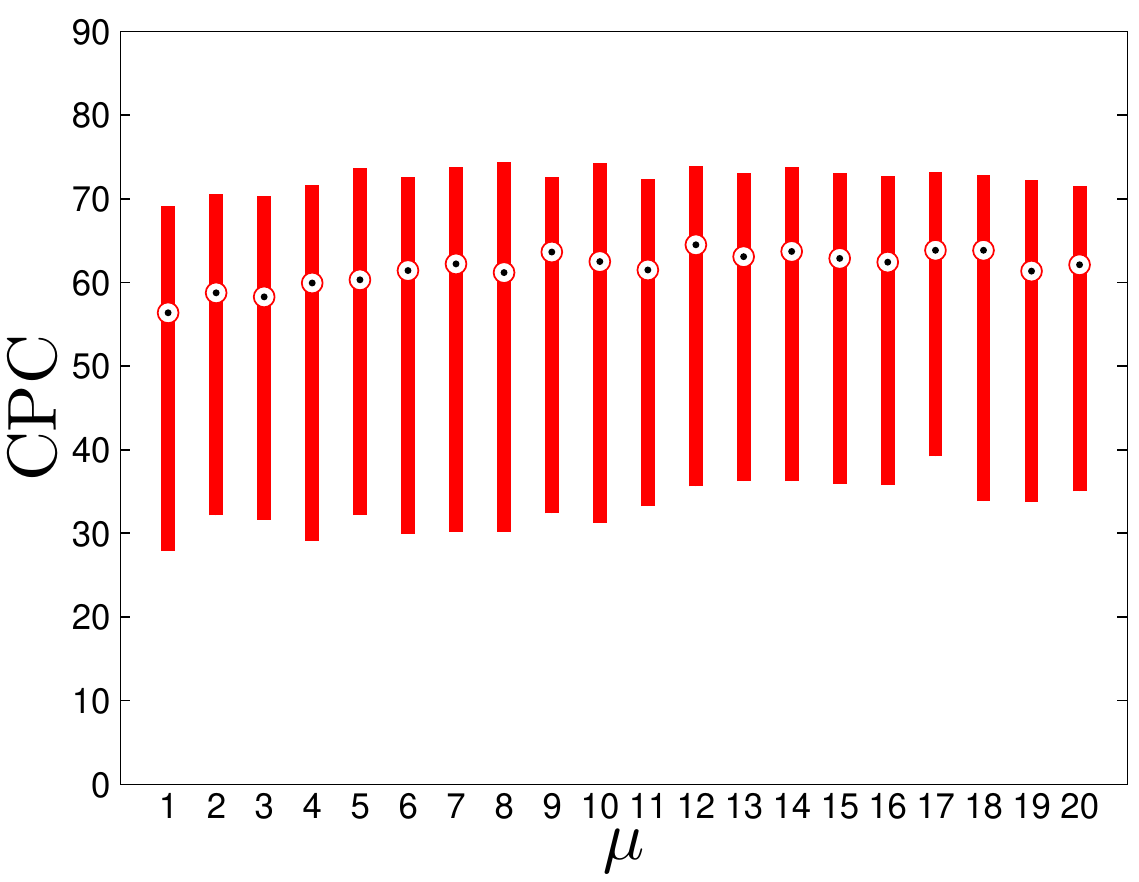}}
  \subfigure[\label{fig:HD-CPC-cap}]
  {\includegraphics[width=0.24\textwidth]{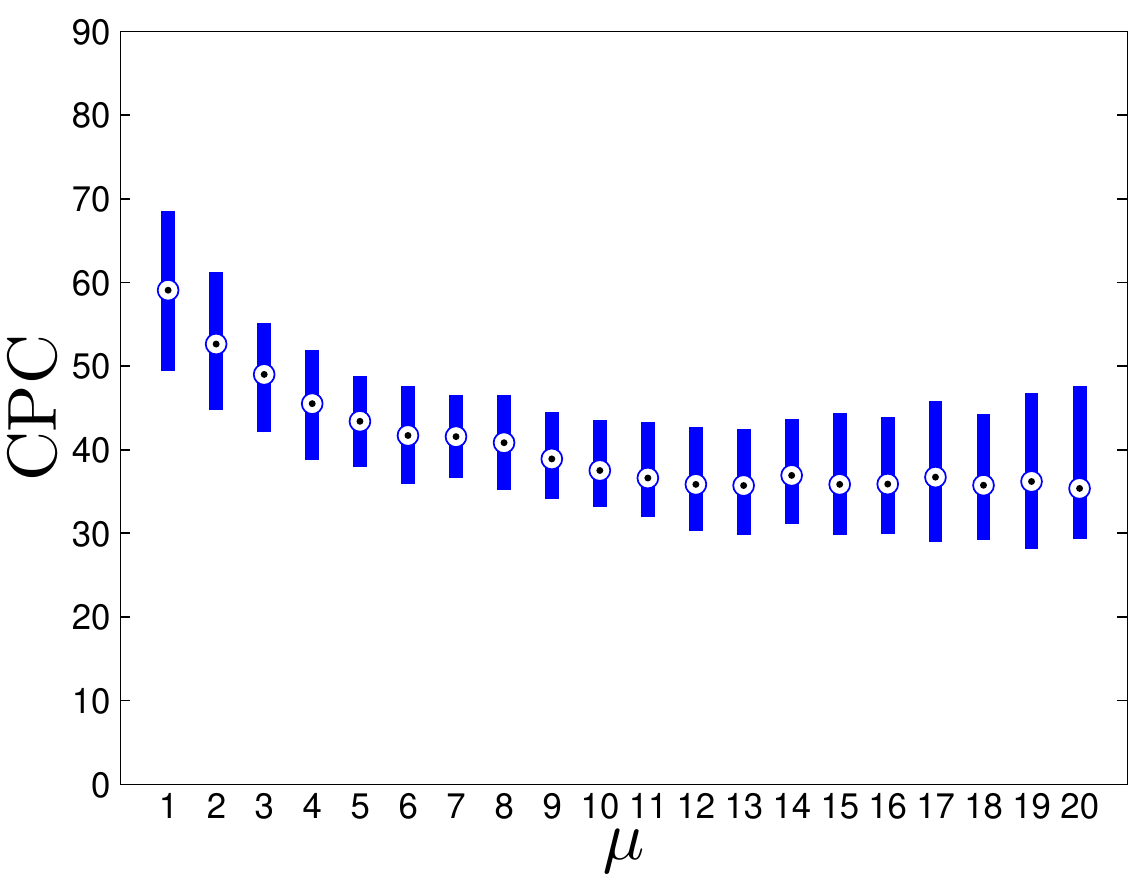}}
  \vspace*{-1ex}
  \caption{Time taken to reach car cap (TCC) is time taken for 50 cars
    that crossed the intersection.  The cost per car (CPC) is computed
    with $W_T = 1$ in~\eqref{eq:cost-fun}. (a) and (c) are for the
    signal-based control and (b) and (d) plots are for the HD
    algorithm.} \label{fig:sim-carcap-compare}
\end{figure*}

Figures~\ref{fig:Tcap-comp1} and~\ref{fig:Cost-comp1} summarize the
results of simulations performed for different values of $\mu$ and
$W_T$ in the cost function~\eqref{eq:cost-fun}, varied from $0.1$
to~$10$. The throughput is consistently better in the signal-based
control except for low-density traffic (high $\mu$). In terms of cost,
except in the cases with very high density traffic (low $\mu$) and
high weightage to travel time in the cost function, the HD algorithm
does better than the signal-based control. 

\emph{Computational expense:} Simulations presented here were
performed on an Intel Core i3-3227U processor in MATLAB R2014a running
on the Linux Mint 17.2 operating system. The most computationally
expensive component of our design is the branch-and-bound algorithm
for scheduling the bubbles. For scheduling 8 bubbles, this typically
took about 1 second. Next most expensive is the $k-$means algorithm,
which took about 13ms per instance. The controller $\guc$ was
implemented based on an explicit solution to the optimal control
problem. On average, an instance of $\guc$ was executed in about
$0.5$ms when the optimal control problem was feasible.



\begin{figure*}[!htpb]
  \centering
  \subfigure[\label{fig:HD-Tcap}]
  {\includegraphics[width=0.3\textwidth]{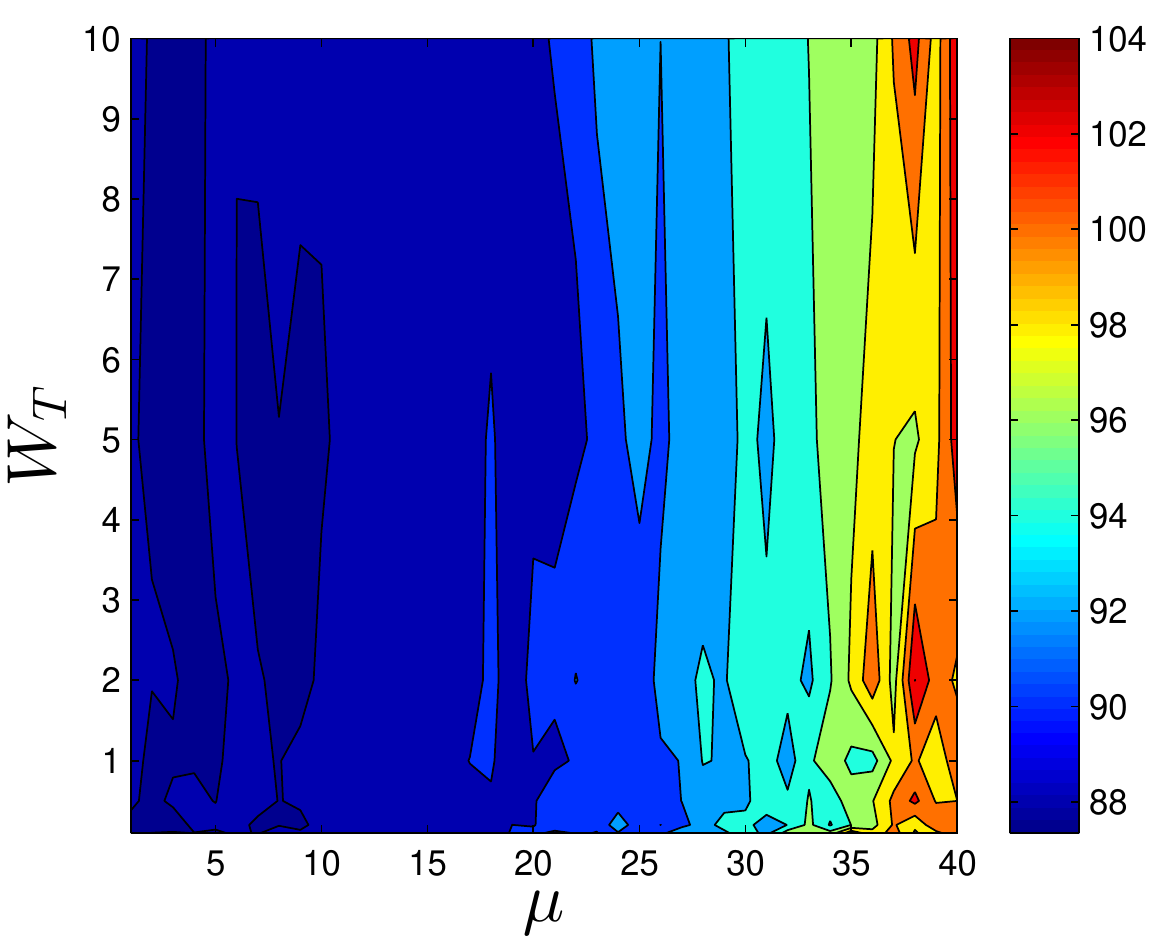}}
  \subfigure[\label{fig:Tcap-comp1}]
  {\includegraphics[width=0.3\textwidth]{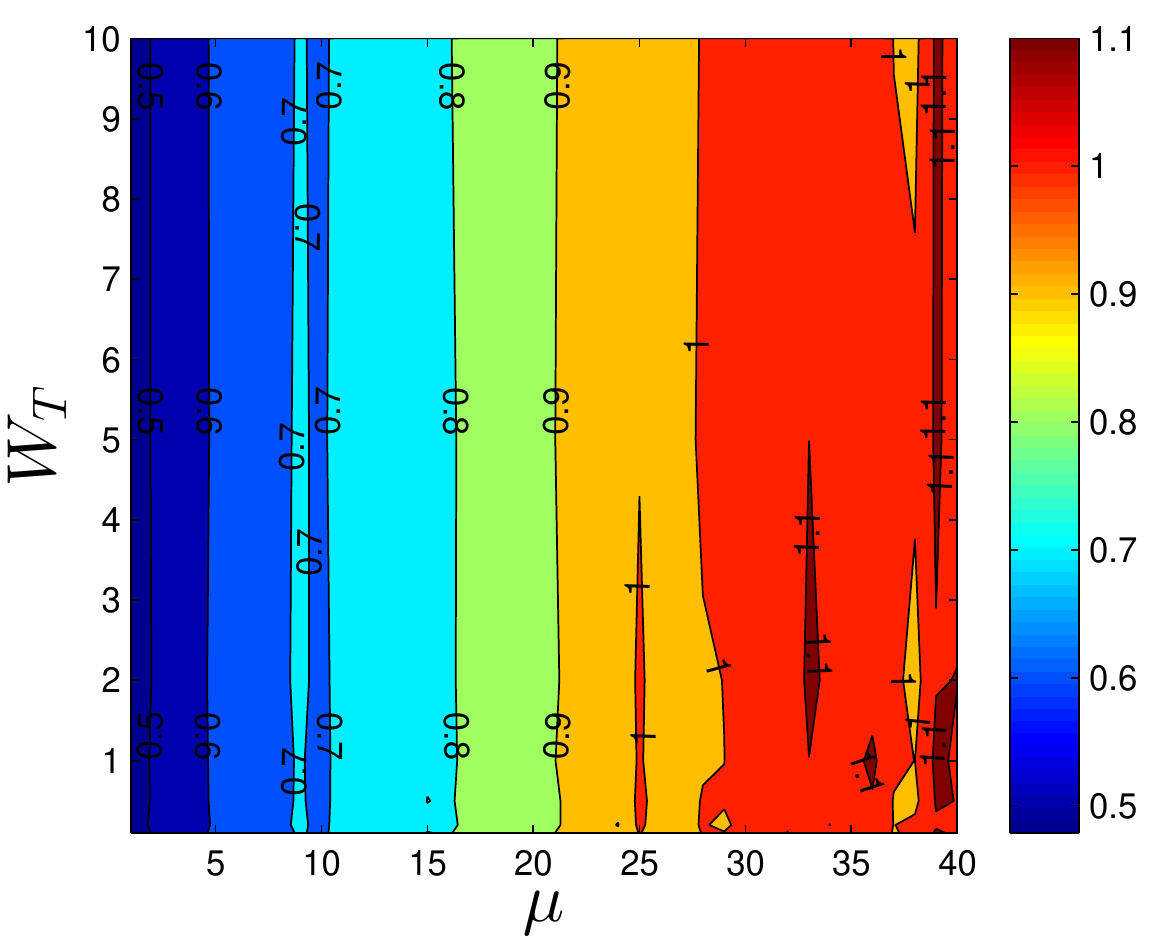}}
  \subfigure[\label{fig:Cost-comp1}]
  {\includegraphics[width=0.3\textwidth]{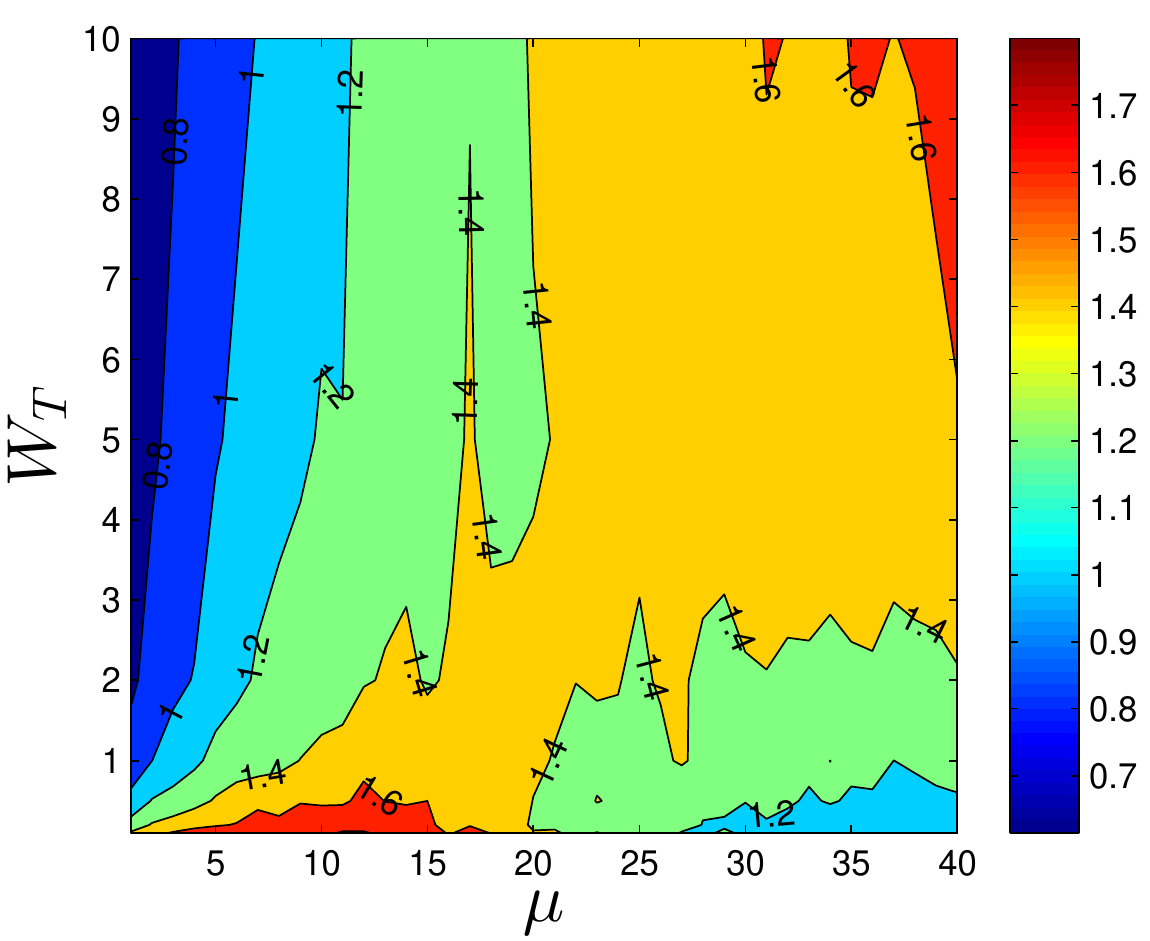}}
  \vspace*{-1ex}
  \caption{Summary of simulations with a cap of $50$ cars for various
    values of $\mu$ and the weight $W_T$ in the cost
    function~\eqref{eq:cost-fun}. (a) Average (over 10 trials) time
    taken for 50 cars to cross the intersection for the HD
    algorithm. The ratio of the average (over 10 trials) time (b) and
    cost (c) taken for 50 cars to cross the intersection for the
    signal-based coordination over the HD
    algorithm.}\label{fig:sim-bulk}
\end{figure*}

\section{Conclusions}

We have studied the problem of coordinating traffic at an intersection
in order to reduce travel time and improve vehicle energy efficiency
while avoiding collisions. Our provably correct intersection
management solution relies on communication among vehicles and the
infrastructure, and combines hierarchical and distributed control to
optimally schedule the passage of vehicle bubbles through the
intersection. Our dynamic bubble-based approach has the advantage of
reducing the complexity of the computationally intensive scheduling
problem and making the solution applicable for different traffic
conditions. Simultaneously, the modular nature of the major aspects of
our design make it easier to make improvements in the future. Finally,
since the central traffic manager at the intersection requires only
aggregate data of a bubble, this decomposition provides a certain
amount of privacy. We have performed simulations to illustrate the
performance of our design and compared it against a traditional,
signal-based intersection management approach.  Our
hierarchical-distributed algorithm performs better than signal-based
control in terms of cost except for high traffic densities and high
weightage to travel time in the cost function.  The guaranteed
throughput is, however, worse due to the conservativeness of the upper
bound on inter-approach times of the vehicles.  We believe further
analysis would improve this component and yield better throughput.
Other future work will study the computational complexity of the
proposed algorithm, the characterization of the expected size of the
generated bubbles, the incorporation of information about incoming
traffic density to improve throughput,
the inclusion of privacy preservation requirements, and the extension
to coordinated management for networks of intersections.

\section*{Acknowledgments}
The research was supported by NSF Award CNS-1446891.

\bibliographystyle{ieeetr} %
\bibliography{alias,FB,JC,Main,Main-add}

\begin{IEEEbiography}[{\includegraphics[width=1in,
    height=1.25in,clip,keepaspectratio]
    {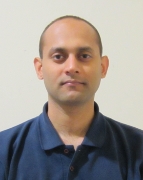}}]{Pavankumar Tallapragada}
  received the B.E. degree in Instrumentation Engineering from SGGS
  Institute of Engineering $\&$ Technology, Nanded, India in 2005,
  M.Sc. (Engg.) degree in Instrumentation from the Indian Institute of
  Science, Bangalore, India in 2007 and the Ph.D. degree in Mechanical
  Engineering from the University of Maryland, College Park in
  2013. He is currently a Postdoctoral Scholar in the Department of
  Mechanical and Aerospace Engineering at the University of
  California, San Diego. His research interests include
  event-triggered control, networked control systems, distributed
  control and transportation and traffic systems.
\end{IEEEbiography}

\begin{IEEEbiography}[{\includegraphics[width=1in,
  height=1.25in,clip,keepaspectratio]{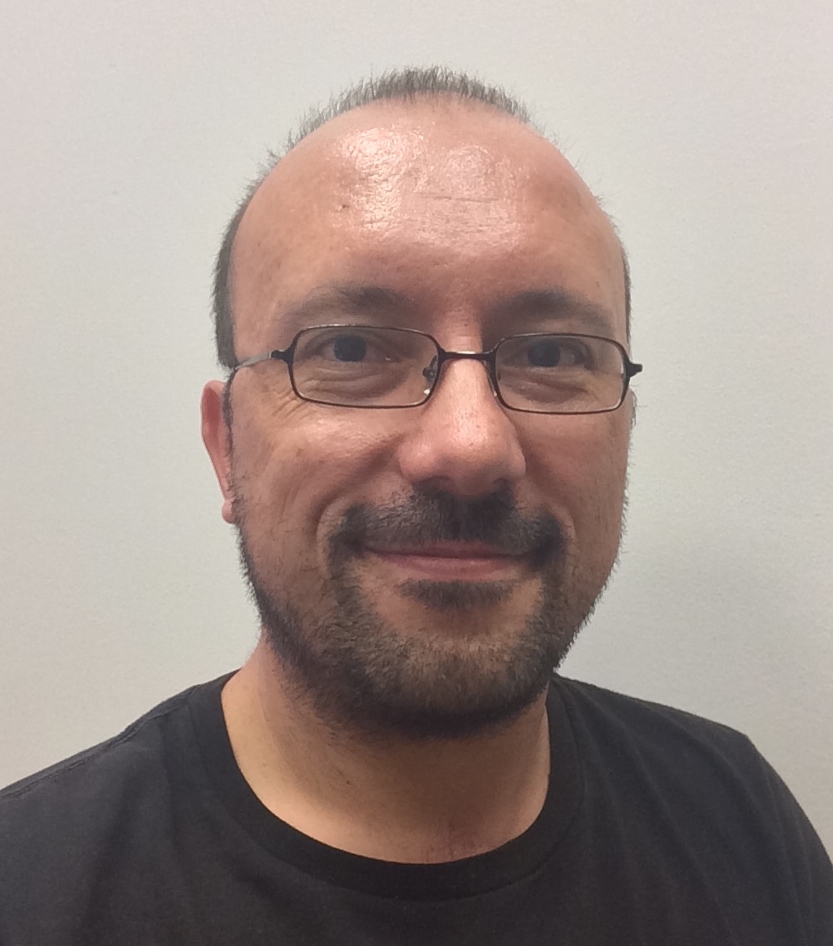}}]{Jorge
    Cort\'es}
  received the Licenciatura degree in mathematics from Universidad de
  Zaragoza, Zaragoza, Spain, in 1997, and the Ph.D. degree in
  engineering mathematics from Universidad Carlos III de Madrid,
  Madrid, Spain, in 2001.  He held post-doctoral positions with the
  University of Twente, Twente, The Netherlands, and the University of
  Illinois at Urbana-Champaign, Urbana, IL, USA. He was an Assistant
  Professor with the Department of Applied Mathematics and Statistics,
  University of California, Santa Cruz, CA, USA, from 2004 to 2007. He
  is currently a Professor in the Department of Mechanical and
  Aerospace Engineering, University of California, San Diego, CA,
  USA. He is the author of Geometric, Control and Numerical Aspects of
  Nonholonomic Systems (Springer-Verlag, 2002) and co-author (together
  with F. Bullo and S. Mart\'inez) of Distributed Control of Robotic
  Networks (Princeton University Press, 2009). He is an IEEE Fellow
  and an IEEE Control Systems Society Distinguished Lecturer.  His
  current research interests include distributed control, networked
  games, power networks, distributed optimization, spatial estimation,
  and geometric mechanics.
\end{IEEEbiography}

\end{document}